\newtheorem{theorem}[equation]{Theorem}
\newtheorem{lemma}[equation]{Lemma}
\newtheorem{proposition}[equation]{Proposition}
\newtheorem{corollary}[equation]{Corollary}
\newtheorem{case}{Case}
\theoremstyle{definition}
\theoremstyle{remark}
\numberwithin{equation}{section}
\DeclareMathAlphabet{\matheur}{U}{eur}{m}{n}
\DeclareMathOperator{\Li}{Li}
\DeclareMathOperator{\Sym}{Sym}
\DeclareMathOperator{\AGM}{AGM}
\DeclareMathOperator{\F}{F}
\mathchardef\pFcomma=\mathcode`, 
\newcommand*\pFq[5]{%
  \begingroup
  \begingroup\lccode`~=`,
    \lowercase{\endgroup\def~}{\pFcomma\mkern\pFqskip}%
  \mathcode`,=\string"8000
  {}_{#1}F_{#2}\biggl(\genfrac..{0pt}{}{#3}{#4};#5\biggr)%
  \endgroup
}
\renewcommand{\Im}{\operatorname{Im}}
\renewcommand{\Re}{\operatorname{Re}}
\begin{document}

\title[The elliptic trilogarithm and Mahler measures of $K3$ surfaces]{The elliptic trilogarithm and Mahler measures of $K3$ surfaces}

\author{Detchat Samart}
\address{Department of Mathematics, Texas A{\&}M University, College
Station,
TX 77843, USA} \email{detchats@math.tamu.edu}

\thanks{The author's research was partially supported by NSF Grant DMS-1200577}

\subjclass[2010]{Primary: 11F67 Secondary: 33C20, 11G55}

\date{\today}

\begin{abstract}
The aim of this paper is to derive explicitly a connection between the Zagier elliptic trilogarithm and Mahler measures of certain families of three-variable polynomials defining $K3$ surfaces. In addition, we prove some linear relations satisfied by the elliptic trilogarithm evaluated at torsion points on elliptic curves. The latter result can be viewed as a higher dimensional analogue of \textit{exotic relations} of the elliptic dilogarithm.
\end{abstract}

\keywords{Elliptic trilogarithm, Mahler measure, Eisenstein-Kronecker series, $K3$ surface, $L$-function}

\maketitle
\section{Introduction}\label{Sec:Intro}
For $m\in\mathbb{N}$, the classical $m^\text{th}$ polylogarithm function is defined by 
\begin{equation*}
\Li_m(z):=\sum_{n=1}^{\infty}\frac{z^n}{n^m}, \qquad |z|<1.
\end{equation*}
One can obtain a multivalued function on $\mathbb{C}-\{0,1\}$ from $\Li_m(z)$ by extending it analytically. There are many versions of higher polylogarithms in the literature, including the following single-valued function defined in \cite[\S 2]{ZG}
\begin{equation*}
\mathcal{L}_m(z)=\mathfrak{R}_m\left(\sum_{k=0}^{m-1}\frac{2^k B_k}{k!}\log^k |z| \Li_{m-k}(z)\right), \qquad |z|\leq 1,
\end{equation*}
where 
\begin{equation*}
\mathfrak{R}_m= \begin{cases}
				 \Re        &\text{if } m \text{ is odd},\\
                 \Im         &\text{if } m \text{ is even},               
              \end{cases}
\end{equation*}
and $B_k$ is the $k^\text{th}$ Bernoulli number $\left(B_0=1, B_1=-1/2, B_2=1/6, B_3=0, \ldots\right)$. It can be extended to a continuous function on $\mathbb{P}^1(\mathbb{C})$ by the functional equation
\begin{equation*}
\mathcal{L}_m\left(z^{-1}\right)=(-1)^{m-1}\mathcal{L}_m(z).
\end{equation*}
For $m=2$, $\mathcal{L}_m(z)$ becomes the \textit{Bloch-Wigner dilogarithm}
\begin{equation*}
D(z)=\Im(\Li_2(z)+\log |z|\log(1-z)).
\end{equation*}
The dilogarithm function and the function $D(z)$ have been studied extensively and are known to satisfy several interesting properties. They are also found to have fruitful applications in algebraic $K$-theory and other related areas (see, for example, \cite{Zagier}). 

Another version of higher polylogarithm functions constructed by Ramakrishnan \cite{Ramakrishnan} and formulated in terms of the polylogarithms by Zagier \cite{ZagierI} is
\begin{equation*}
D_m(z)= \mathfrak{R}_m\left(\sum_{k=0}^{m-1}\frac{(-1)^k}{k!}\log^k|z|\Li_{m-k}(z)-\frac{(-1)^m}{2m!}\log^m|z|\right),
\end{equation*}
where $|z|\leq 1$.

Now let us consider an ``averaged'' version of the function $D(z)$, which will be described below. Let $E$ be an elliptic curve defined over $\mathbb{C}.$ Then there exist $\tau\in\mathcal{H}:=\{\tau\in\mathbb{C}\mid \Im(\tau)>0\}$ and  isomorphisms
\begin{equation}\label{E:isom}
\begin{aligned}
\qquad E(\mathbb{C})\qquad& \tilde{\longrightarrow} & \mathbb{C}/\Lambda\qquad & \tilde{\longrightarrow} & \mathbb{C}^{\times}/q^{\mathbb{Z}}\\
\left(\wp_{\Lambda}(u),\wp_{\Lambda}'(u)\right)\qquad & \longmapsto &  u \pmod \Lambda \qquad & \longmapsto & e^{2\pi i u},
\end{aligned}
\end{equation}
where $\Lambda=\mathbb{Z}+\mathbb{Z}\tau$, and $\wp_{\Lambda}$ denotes the Weierstrass $\wp$-function. Using the transformations above, Bloch \cite{Bloch} defined the \textit{elliptic dilogarithm} $D^{E}:E(\mathbb{C})\rightarrow \mathbb{R}$ by
\begin{equation*}
D^E(P):=D^E(x)=\sum_{n=-\infty}^{\infty}D(q^n x),
\end{equation*}
where $q=e^{2\pi i\tau}$ and $x=e^{2\pi i u}$ is the image of $P$ in  $\mathbb{C}^{\times}/q^{\mathbb{Z}}.$ (Note that the series above converges absolutely with exponential rapidity and is invariant under $x\mapsto qx$ \cite[\S 2]{ZagierI}.) 

Recall from \cite{Weil} that for $a,b\in\mathbb{N}$ the series
\begin{equation*}
K_{a,b}(\tau;u)=\sideset{}{'}\sum_{m,n\in\mathbb{Z}}\frac{e^{2\pi i(n\xi-m\eta)}}{(m\tau+n)^a(m\bar{\tau}+n)^b},
\end{equation*}
where $u=\xi\tau+\eta$ and $\xi,\eta\in\mathbb{R}/\mathbb{Z}$, is called the \textit{Eisenstein-Kronecker series.} (Here and throughout $\displaystyle\sideset{}{'}\sum_{m,n}$ means that the summation does not include $(m,n)=(0,0)$.) 
Then Bloch introduced the regulator function
\begin{equation*}
R^E(e^{2\pi i u})=\frac{\Im(\tau)^2}{\pi}K_{2,1}(\tau;u).
\end{equation*} 
It can be shown that $\Re(R^E)=D^E$, and that $-\Im(R^E)$ is the real-valued function given by 
\begin{equation*}
J^E(x)=\sum_{n=0}^{\infty}J(q^n x)-\sum_{n=1}^{\infty}J(q^n x^{-1})+\frac{1}{3}\log^2|q|B_3\left(\frac{\log|x|}{\log|q|}\right),
\end{equation*}
where $J(x)=\log|x|\log|1-x|$, and $B_n(X)$ denotes the $n^\text{th}$ Bernoulli polynomial. Similar to $D^E$, the function $J^E$ is well-defined and invariant under $x\mapsto qx.$ We can also extend $R^E,D^E,$ and $J^E$ by linearity to the group of divisors on $E(\mathbb{C})$. \\
\indent Recall that for any nonzero Laurent polynomial $P\in \mathbb{C}[X_1^{\pm 1},\ldots,X_n^{\pm 1}]$, the Mahler measure of $P$ is defined by 
\begin{equation*}
m(P)=\int_0^1\cdots \int_0^1 \log |P(e^{2\pi i \theta_1},\ldots,e^{2\pi i \theta_n})|\,d\theta_1\cdots d\theta_n.
\end{equation*}
Let us denote
\begin{align*}
m_2(t)&:=2m(x+x^{-1}+y+y^{-1}+t^{1/2}),\\
m_3(t)&:=3m(x^3+y^3+1-t^{1/3}xy).
\end{align*}
Boyd \cite{Boyd} and Rodriguez Villegas \cite{RV} verified numerically that for many values of $t\in\mathbb{Z}$, $m_2(t)$ and $m_3(t)$ appear to be of the form 
\begin{equation*}
m_j(t)\stackrel{?}\sim_{\mathbb{Q}} L'(E,0), \qquad j=2,3,
\end{equation*} 
where $E$ is the elliptic curve over $\mathbb{Q}$ defined by the zero locus of $x+x^{-1}+y+y^{-1}+t^{1/2}$ or $x^3+y^3+1-t^{1/3}xy$, depending on $j$, and $A\sim_{\mathbb{Q}} B$ means $B=cA$ for some $c\in\mathbb{Q}.$ (Note that by the functional equation of $L(E,s)$, one has $|L'(E,0)|=(2\pi)^{-2}N_{E}L(E,2)$, where $N_E$ is the conductor of $E$.) However, most of these conjectured formulas have remained unproved. Rodriguez Villegas \cite{RV} proved some of these formulas when the corresponding elliptic curves have complex multiplication by showing first that $m_2(t)$ and $m_3(t)$ can be expressed in terms of Eisenstein-Kronecker series. Afterward, Lal\'{i}n and Rogers\cite{LR} verified that for every $t\in\mathbb{C}$, if $E$ is the elliptic curve defined by $x+x^{-1}+y+y^{-1}+t^{1/2}=0$ and $\tau=\omega_2/\omega_1$, where $\omega_1$ and $\omega_2$ are the real and complex periods of $E$, respectively, then 
\begin{equation*}
m_2\left(t^2\right)=\frac{1}{\pi\Im\left(\frac{1}{2\tau}\right)}J^E(e^{-\pi i/2\tau}).
\end{equation*}  
This result then implies the result of Rodriguez Villegas above immediately. More recently, Guillera and Rogers \cite[Thm.~5]{GR} proved that $m_j(t), t=2,3,$ can be rephrased explicitly in terms of the elliptic dilogarithm evaluated at torsion points. For instance, they showed that 
if $E(k,l)$ is the elliptic curve given by the equation
\begin{equation*}
y^2=4x^3-27(k^4-16k^2+16)l^2 x+27(k^6-24k^4+120k^2+64)l^3,
\end{equation*}
and $u=18,25,64,$ and $256$, then $m_2(u)$ can be expressed as
\begin{equation*}
m_2(u)=\frac{8}{\pi}D^{E(\sqrt{u},l)}(P),\\
\end{equation*}
where $l\in\{1/2,1,2\}$ and $P$ is a $4$-torsion point on the corresponding elliptic curve. They also proved that if $E$ is the elliptic curve $y^2=4x^3-432x+1188$ and $P=(-6,54)$, then the Mahler measure identity
\begin{equation*}
16m_3\left(\frac{(7+\sqrt{5})^3}{4}\right)-8m_3\left(\frac{(7-\sqrt{5})^3}{4}\right)=19m_3(32)
\end{equation*}
is equivalent to the exotic relation
\begin{equation*}
16D^E(P)-11D^E(2P)=0,
\end{equation*}
verified by Bertin in \cite{Bertin}. 

In the present paper, we will investigate the elliptic version of 
\begin{equation*}
\mathcal{L}_3(z)=\Re\left(\Li_3(z)-\log|z|\Li_2(z)+\frac{1}{3}\log^2|z|\Li_1(z)\right),
\end{equation*}
namely \textit{the elliptic trilogarithm}
\begin{equation*}
\mathcal{L}^E_{3,1}(P):=\mathcal{L}_{3,1}^E(x)=\sum_{n=-\infty}^{\infty}\mathcal{L}_3(q^n x),
\end{equation*}
where $E$ is again identified with $\mathbb{C}/(\mathbb{Z}+\mathbb{Z}\tau)$, $u$ is the image of $P$ in $\mathbb{C}/(\mathbb{Z}+\mathbb{Z}\tau)$, $q=e^{2\pi i \tau}$, and $x=e^{2\pi i u}$. This function was first defined by Zagier in \cite[\S 10]{ZG}. He also introduced a companion of $\mathcal{L}^E_{3,1}$, which is the following single-valued function:
\begin{equation*}
\mathcal{L}^E_{3,2}(P):=\mathcal{L}^E_{3,2}(x)=\sum_{n=0}^{\infty}J_3(q^n x)+\sum_{n=1}^{\infty}J_3(q^n x^{-1})+\frac{\log^2|x|\log^2|qx^{-1}|}{4\log|q|},
\end{equation*}
where $J_3(x)=\log^2|x|\log|1-x|.$ Again, one can extend $\mathcal{L}^E_{3,j},j=1,2,$ to all divisors on $E(\mathbb{C})$ by linearity. Zagier claimed that $\mathcal{L}^E_{3,1}$ and $\mathcal{L}^E_{3,2}$ are linear combinations of the Eisenstein-Kronecker series $K_{1,3}$ and $K_{2,2}$, and we shall derive this result explicitly in Section~\ref{Sec:elltrilog}. Furthermore, we will deduce an integer relation satisfied by $\mathcal{L}^E_{3,1}$ evaluated at the torsion points of order $2,3,$ and $4$ on elliptic curves. 

In Section~\ref{Sec:MM} we draw a connection between the elliptic trilogarithm and Mahler measures of three-variable polynomials by proving the following result, which is the main result of this paper: 
\begin{theorem}\label{T:Main}
Denote
\begin{align*}
n_2(s)&:=2m((x+x^{-1})(y+y^{-1})(z+z^{-1})+s^{1/2}),\\
n_3(s)&:=m\left((x+x^{-1})^2(y+y^{-1})^2(1+z)^3z^{-2}-s\right),\\
n_4(s)&:=4m\left(x^4+y^4+z^4+1+s^{1/4}xyz\right).
\end{align*} 
\begin{enumerate}
\item[(i)]
Let $E_s$ be the family of elliptic curves given by 
$E_s: y^2=(x-1)\left(x^2-\frac{s}{s-64}\right).$
If $s\in\mathbb{R}\backslash[0,64]$, $r:=\sqrt{\frac{s}{s-64}}$, $P:=(-r,0)$, and $Q:=(r,0)$, then
\begin{equation}\label{E:Main}
n_2(s)=\frac{8}{3\pi^2}\left(6\mathcal{L}^{E_s}_{3,1}((P)-(Q))-\mathcal{L}^{E_s}_{3,2}((P)-(Q))\right).
\end{equation}
\item[(ii)]
Let $F_s$ be the family of elliptic curves given by 
$F_s: x^3+y^3+1-rxy=0,$ where $r=\sqrt[3]{\frac{s+\sqrt{s(s-108)}}{2}}$. If $O,P$ and $Q$ are the points on $F_s$ corresponding to $1,1/3$ and $\tau/3$, respectively, via the isomorphism $F_s\cong \mathbb{C}/(\mathbb{Z}+\mathbb{Z}\tau), \tau\in\mathcal{H}$, then for all $s\in [108,\infty)$
\begin{equation}\label{E:Main2}
n_3(s)=\frac{3}{4\pi^2}\left(15\mathcal{L}^{F_s}_{3,1}((Q)-3(P)-6(P+Q))+\mathcal{L}^{F_s}_{3,2}(3(P)+6(P+Q)-7(Q)-2(O))\right).
\end{equation}
\item[(iii)]
Let $G_s$ be be the family of elliptic curves given by 
$G_s: y^2=(x-1)(x-r')(x+r'),$ where $r'=\sqrt{\frac{1+\sqrt{1-\frac{256}{s}}}{2}}$ and let $P$ and $Q$ denote the points on $G_s$ corresponding to $\tau/2$ and $3/4$, respectively, via the isomorphism $G_s\cong \mathbb{C}/(\mathbb{Z}+\mathbb{Z}\tau)$. If $s\geq 256$, then
\begin{equation}\label{E:Main31}
\begin{aligned}
n_4(s)&=\frac{16}{9\pi^2}\big(15\mathcal{L}^{G_s}_{3,1}(2(P)-(2Q)+2(P+2Q))\\
&\qquad +\mathcal{L}^{G_s}_{3,2}(4(Q)-5(P)+2(2Q)+4(P+Q)-5(P+2Q))\big).
\end{aligned}
\end{equation}
On the other hand, if $s<0$, then
\begin{equation}\label{E:Main32}
\begin{aligned}
n_4(s)&=\frac{8}{9\pi^2}\big(30\mathcal{L}^{G_s}_{3,1}(2(2Q)-(P+2Q)+2(P))\\
&\qquad +\mathcal{L}^{G_s}_{3,2}(5(P+2Q)+8(Q)+8(P+Q)-11(2Q)-10(P))\big).
\end{aligned}
\end{equation}
\end{enumerate}
\end{theorem}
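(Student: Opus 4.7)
The plan is to treat the three parts in parallel by combining Jensen's formula with the Eisenstein-Kronecker series expansions of $\mathcal{L}^{E}_{3,1}$ and $\mathcal{L}^{E}_{3,2}$ established in Section~\ref{Sec:elltrilog}. For each family, I would first apply Jensen's formula in the variable $z$. The defining polynomial is Laurent in $z$ with roots that can be written explicitly in terms of $x$, $y$, and $s$ (quadratic in $z$ after clearing denominators in (i), cubic in (ii), and quartic in (iii)), so Jensen reduces each three-variable Mahler measure to a two-dimensional integral over $\mathbb{T}^2$ of the logarithm of an algebraic function of $x$, $y$, and $s$, supported on the region where the largest root of the Laurent polynomial has modulus exceeding one.

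Next, I would parameterize the fibres of the resulting two-dimensional integrand using the elliptic curve $E_s$ (respectively $F_s$, $G_s$), in the spirit of the two-variable approach of Rodriguez Villegas and its refinement by Lal\'in and Rogers quoted in the introduction. Pulling back to the universal cover $\mathbb{C}/\Lambda$, the integrand should decompose as a sum of $\mathbb{Z}$-translates of the classical functions $\mathcal{L}_3$ and $J_3$ evaluated at prescribed torsion points --- in (i) the real $2$-torsion $\{P,Q\}$, in (ii) the $3$-torsion generated by $P$ and $Q$, and in (iii) the $4$-torsion combinations built from $P$ and $Q$. Summing over $n\in\mathbb{Z}$ and invoking the definitions of $\mathcal{L}^{E}_{3,1}$ and $\mathcal{L}^{E}_{3,2}$ then yields the right-hand sides of \eqref{E:Main}, \eqref{E:Main2}, \eqref{E:Main31}, and \eqref{E:Main32}. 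The rational coefficients would be fixed by matching after differentiation in $s$: $dn_j/ds$ is an Eisenstein-like series whose $q$-expansion can be compared against the $s$-derivatives of $\mathcal{L}^{E}_{3,j}$ computed from the explicit Eisenstein-Kronecker formulas.

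The main obstacle is the explicit bookkeeping in the second step: identifying \emph{which} divisor on $E_s$ is produced by the integrand coming out of Jensen, and tracking signs as $s$ crosses the singular fibres of the respective families. Part (ii) is especially delicate because the asymmetric Laurent polynomial $(1+z)^3 z^{-2}$ yields a cubic in $z$ and therefore divisors that are not invariant under $[-1]$, which is why the linear combination in \eqref{E:Main2} involves seven distinct torsion values rather than a pair. Similarly, the sign change between \eqref{E:Main31} (valid for $s\geq 256$) and \eqref{E:Main32} (valid for $s<0$) reflects the change in the real topology of $G_s$: the real period degenerates at $s=256$ and again as $s$ crosses $0$, forcing different real parameterizations of the integral in the two regimes. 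A secondary technical issue, which I expect to handle in passing, is convergence of the Eisenstein-Kronecker series and legitimacy of termwise differentiation, both of which would need to be verified case by case.
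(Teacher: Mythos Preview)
Your outline diverges substantially from the paper's argument, and the second step as written contains a real gap.

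The paper does not apply Jensen's formula at all. Instead it invokes Proposition~\ref{P:Samart} (established in \cite{SamartII}), which already expresses each $n_j(s_j(q))$ as an explicit lattice sum of Eisenstein--Kronecker type once $s$ is written via the Hauptmodul $s_j$ for $\Gamma_0(N)^*$. Independently, Corollary~\ref{Co:2} writes $\mathcal{L}^E_{3,1}$ and $\mathcal{L}^E_{3,2}$ as lattice sums over $\mathbb{Z}^2$. The proof then consists of three concrete steps: (a) using Lemma~\ref{L:period} together with hypergeometric transformations and Ramanujan's theory in signatures $2,3,4$ to show that the modular parameter appearing in Proposition~\ref{P:Samart} equals (a simple M\"obius transform of) the normalized period $\tau$ of $E_s$, $F_s$, or $G_s$; (b) identifying which half-, third-, or quarter-period corresponds to each named torsion point via the classical ordering of the $e_j$; and (c) checking by hand that the two double sums agree after splitting according to the parities (or residues mod~$3$) of $m,n$. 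The bifurcation between \eqref{E:Main31} and \eqref{E:Main32} comes from the fact that for $s\ge 256$ one must take $q(-1/2\tau)$ while for $s<0$ one must take $q((\tau-1)/2\tau)$ to land in the fundamental domain where Proposition~\ref{P:Samart} applies, and these two substitutions impose different parity constraints on the resulting lattice sums.

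The gap in your plan is the phrase ``parameterize the fibres of the resulting two-dimensional integrand using the elliptic curve $E_s$.'' After one application of Jensen in $z$, the remaining integrand lives on a $K3$ surface, and $E_s$, $F_s$, $G_s$ are \emph{not} fibres of any elliptic fibration of that surface visible in the affine coordinates: they are the Shioda--Inose partners, related to the $K3$ only through the transcendental lattice, i.e.\ through the coincidence of Picard--Fuchs equations that the paper spells out between Proposition~\ref{P:Samart} and Lemma~\ref{L:period}. There is no change of variables on $\mathbb{T}^2$ that pulls the post-Jensen integrand back to $\mathbb{C}/(\mathbb{Z}+\mathbb{Z}\tau)$ for \emph{this} $\tau$, so the mechanism by which the specific torsion divisors in \eqref{E:Main}--\eqref{E:Main32} would emerge from your Step~2 is missing. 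Your Step~3 (matching $q$-expansions of $s$-derivatives) is closer in spirit to what the modular approach actually does, but it presupposes that the correct $\tau$ and the correct divisor have already been produced; in the paper that identification is supplied by Lemma~\ref{L:period} and the half-period analysis, not by a geometric fibering argument.
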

The families of elliptic curves $E_s,F_s,$ and $G_s$ are indeed related to the $K3$ surfaces defined by the corresponding polynomials of which Mahler measures are considered via Shioda-Inose structures. More details about this type of relations will also be given in Section~\ref{Sec:MM}.\\
\indent The author has verified in \cite{Samart,SamartII} that for some real values of $s$, $n_j(s)$ $(j=2,3,4)$ are rational linear combinations of $L'(g,0)$ and $L'(\chi,-1)$, where $g$ is a CM newform of weight $3$ and $\chi$ is a Dirichlet character. Also, many conjectural Mahler measure formulas of this type have been found via numerical computations. Therefore, we obtain explicit relations between the elliptic trilogarithm and those $L$-values. We shall also list some conjectural formulas of evaluations of $\mathcal{L}^{E_s}_{3,j}, j=1,2,$ in terms of special values of $L$-functions discovered via our numerical computations in Section~\ref{Sec:Lvalues}.

\section{The functions $\mathcal{L}^E_{3,1}$ and $\mathcal{L}^E_{3,2}$}\label{Sec:elltrilog}
Most components of the results in this section are deduced from Zagier's results in \cite{ZagierI}. Thus let us first recall some notations and facts obtained from that paper. For any $a,b,l,m\in\mathbb{N}$ with $1\leq a,m\leq l$ and $x,q\in\mathbb{C}$ with $|q|<1$, Zagier defined
\begin{align*}
c_{a,m}^{(l)}&=\sum_{h=1}^a (-1)^{h-1}\binom{m-1}{h-1}\binom{l-m}{a-h},\\
D^{*}_m(x)&= \begin{cases}
				 D_m(x)       &\text{if } m \text{ is odd},\\
                 iD_m(x)      &\text{if } m \text{ is even},               
              \end{cases}\\
D_{a,b}(x)&=2\sum_{m=1}^r c_{a,m}^{(r)}D^{*}_m(x)\frac{(-\log|x|)^{r-m}}{(r-m)!}+\frac{(-2\log|x|)^r}{2\cdot r!},\qquad r=a+b-1,\\
D_{a,b}(q;x)&=\sum_{n=0}^{\infty}D_{a,b}(q^n x)+(-1)^{r-1}\sum_{n=1}^{\infty}D_{a,b}(q^n x^{-1})+\frac{(-2\log|q|)^r}{(r+1)!}B_{r+1}\left(\frac{\log|x|}{\log|q|}\right).
\end{align*}
\begin{proposition}[{Zagier \cite[\S 2]{ZagierI}}]\label{P:Zagier}
Unless otherwise stated, let $a,b\in\mathbb{N}$ and $r=a+b-1$.
\begin{enumerate}
\item[(i)] $D_{a,b}$ is a single-valued real-analytic function on $\mathbb{C}\backslash [1,\infty)$ and satisfies the functional equation
\begin{equation*}
D_{a,b}(x^{-1})=(-1)^{r-1}D_{a,b}(x)+\frac{(2\log|x|)^r}{r!}.
\end{equation*}
\item[(ii)] For any $x\in\mathbb{C},m\geq 1,$ and $n\geq 0$, we have the inversion formula
\begin{equation*}
D^{*}_m(x)\frac{(-\log|x|)^n}{n!}=\sum_{\substack{a,b\geq 1\\ a+b=r+1}}c_{m,a}^{(r)}\left(\frac{D_{a,b}(x)}{2^r}-\frac{(-\log|x|)^r}{2\cdot r!}\right), \quad \text{where } r=m+n.
\end{equation*}
\item[(iii)] Let $q=e^{2\pi i\tau}$ and $x=e^{2\pi i u}$, where $\tau\in\mathcal{H}$ and $u=\xi\tau+\eta, \xi,\eta\in\mathbb{R}$. Then
\begin{equation*}
D_{a,b}(q;x)=\frac{(\tau-\bar{\tau})^r}{2\pi i}K_{a,b}(\tau;u).
\end{equation*}
\end{enumerate}
\end{proposition}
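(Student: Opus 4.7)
My plan is to prove the three parts in turn, exploiting the fact that (i) and (ii) are essentially formal consequences of the polylogarithm inversion formula and a combinatorial orthogonality, while (iii) requires a genuine Fourier expansion of the Eisenstein-Kronecker series. For part (i), I would first establish the scalar functional equation $D_m^{*}(x^{-1})=(-1)^{m-1}D_m^{*}(x)$ using Jonqui\`ere's inversion
\[
\Li_m(z)+(-1)^m\Li_m(z^{-1})=-\frac{(2\pi i)^m}{m!}B_m\!\left(\tfrac{1}{2}+\tfrac{\log(-z)}{2\pi i}\right),
\]
which substituted into the definition of $D_m$ produces a right-hand side that is purely real or purely imaginary according to the parity killed by $\mathfrak{R}_m$. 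Single-valuedness and real-analyticity of $D_{a,b}$ on $\CC\setminus[1,\infty)$ are inherited termwise from $D_m^{*}$ and $\log|x|$. To obtain the functional equation for $D_{a,b}$, substitute $x\mapsto x^{-1}$ into its defining series: the factor $(-\log|x^{-1}|)^{r-m}=(-1)^{r-m}(-\log|x|)^{r-m}$ combines with the sign $(-1)^{m-1}$ from $D_m^{*}(x^{-1})$ to produce a uniform $(-1)^{r-1}$, while the inhomogeneous $(-2\log|x^{-1}|)^r/(2\,r!)$ acquires an extra factor $(-1)^r$, and the net discrepancy with $(-1)^{r-1}D_{a,b}(x)$ is exactly $(2\log|x|)^r/r!$, as claimed.

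Part (ii) amounts to inverting the linear relation that defines $D_{a,b}$ in terms of $D_m^{*}$. The key combinatorial input is the generating-function presentation
\[
c_{a,m}^{(r)}=[w^{a-1}]\,(1+w)^{r-m}(1-w)^{m-1},
\]
which I would verify by writing each binomial as a coefficient extraction in $z$ and $w$ and collapsing the inner sum via the substitution $z\mapsto -w$. From this presentation, the further substitution $t=\tfrac{1-w}{1+w}$ inside the summation yields the orthogonality
\[
\sum_{a=1}^{r}c_{m,a}^{(r)}\,c_{a,m'}^{(r)}=2^{r-1}\delta_{m,m'}.
\]
Rearranging the definition as $\tfrac{D_{a,b}(x)}{2^r}-\tfrac{(-\log|x|)^r}{2\,r!}=2^{-(r-1)}\sum_{m'}c_{a,m'}^{(r)}D_{m'}^{*}(x)\tfrac{(-\log|x|)^{r-m'}}{(r-m')!}$, then multiplying by $c_{m,a}^{(r)}$ and summing over $a$ produces (ii) immediately.

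Part (iii) is the analytically substantive identity. I would Fourier-expand $K_{a,b}(\tau;u)$ in the coordinate $\xi$ and identify the result with $D_{a,b}(q;x)$. Split the double sum by $m=0$ versus $m\neq 0$: the diagonal contribution $\sum_{n\neq 0}e^{2\pi i n\xi}/n^{a+b}$ is a Hurwitz zeta value that evaluates to a Bernoulli polynomial in $\xi=\log|x|/\log|q|$, matching the inhomogeneous term in the definition of $D_{a,b}(q;x)$. For $m\neq 0$, apply Lipschitz's summation formula to the inner $n$-sum, performing a partial-fraction decomposition of $1/((m\tau+n)^a(m\bar\tau+n)^b)$ to extract a common factor of $(\tau-\bar\tau)^{-r}$; the remaining convergent polylogarithmic series in $q^{|m|}x^{\pm 1}$, organized by its $\mathfrak{R}_m$-pattern, reassembles into $\sum_{n\geq 0}D_{a,b}(q^n x)+(-1)^{r-1}\sum_{n\geq 1}D_{a,b}(q^n x^{-1})$, with the sign $(-1)^{r-1}$ arising from the swap $m\leftrightarrow -m$, which interchanges $\tau$ with $\bar\tau$ and $u$ with $-u$. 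Collecting the prefactor $(\tau-\bar\tau)^r/(2\pi i)$ completes the identification.

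The main obstacle I anticipate is in part (iii): organizing the Lipschitz/partial-fraction expansion so that the particular real-analytic combination $D_{a,b}$ (mixing $\Re$ and $\Im$ parts of polylogarithms with several powers of $\log|x|$) emerges cleanly from the $n$-summation, and verifying that the Bernoulli contribution from the $m=0$ diagonal lines up with the inhomogeneous term with exactly the right normalization. By contrast, parts (i) and (ii) are essentially formal once the polylogarithm inversion and the generating-function identity above are in hand.
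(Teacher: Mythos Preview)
The paper does not give its own proof of this proposition: it is stated with attribution to Zagier \cite[\S 2]{ZagierI} and used as a black box, so there is nothing in the paper to compare your argument against. Your outline is, in substance, a sketch of Zagier's original proof in that reference---the inversion formula for $D_m^{*}$ from Jonqui\`ere, the generating-function presentation $c_{a,m}^{(r)}=[w^{a-1}](1+w)^{r-m}(1-w)^{m-1}$ yielding the orthogonality $\sum_a c_{m,a}^{(r)}c_{a,m'}^{(r)}=2^{r-1}\delta_{m,m'}$, and the Lipschitz/partial-fraction Fourier expansion of $K_{a,b}$---and the steps you describe are correct in outline. If anything, your plan is more detailed than what the present paper requires, since Proposition~\ref{P:Zagier} is simply quoted.
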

It was pointed out in \cite[\S 10]{ZG} that $\mathcal{L}^E_{3,1}$ and $\mathcal{L}^E_{3,2}$ are linear combinations of $K_{1,3}$ and $K_{2,2}$, but this fact does not seem to be shown in the literature. Therefore, we will give an account of it here before applying it to prove other results.

\begin{proposition}
Suppose that $E\cong \mathbb{C}/(\mathbb{Z}+\mathbb{Z}\tau)$ with $\tau\in\mathcal{H}$. Let $q=e^{2\pi i\tau}$ and $x\in\mathbb{C}.$ Then the following identities hold:
\begin{align}
6\mathcal{L}^E_{3,1}(x)-\mathcal{L}^E_{3,2}(x)&=\frac{3}{4}\left(2\Re\left(D_{1,3}(q;x)\right)-D_{2,2}(q;x)\right)-\frac{\log^3|q|}{120}\label{E:EK1},\\
\mathcal{L}^E_{3,1}(x)&=\frac{1}{6}\left(\Re\left(D_{1,3}(q;x)\right)-D_{2,2}(q;x)\right)\label{E:EK2},\\
\mathcal{L}^E_{3,2}(x)&=-\frac{1}{4}\left(2\Re\left(D_{1,3}(q;x)\right)+D_{2,2}(q;x)\right)+\frac{\log^3|q|}{120}\label{E:EK3}.
\end{align}
\end{proposition}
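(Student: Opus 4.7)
The plan is to reduce all three identities to pointwise identities relating $\mathcal{L}_3(z)$ and $J_3(z)$ to $\Re D_{1,3}(z)$ and $D_{2,2}(z)$, and then to sum those identities over $n$ while tracking the Bernoulli correction built into the definition of $D_{a,b}(q;x)$. Note that \eqref{E:EK1} is $6\cdot\eqref{E:EK2}-\eqref{E:EK3}$, so only \eqref{E:EK2} and \eqref{E:EK3} actually need to be proved.

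For the pointwise step I would apply the inversion formula of Proposition~\ref{P:Zagier}(ii) with $r=3$ in the two cases $(m,n)=(3,0)$ and $(m,n)=(1,2)$. A short computation of the coefficients $c^{(3)}_{m,a}$ shows that the imaginary contributions (coming from $D^*_2=iD_2$) enter $D_{1,3}$ and $D_{3,1}$ with opposite signs, so $D_{1,3}(z)+D_{3,1}(z)=2\Re D_{1,3}(z)$. Inserting the explicit value $D_1(z)=\tfrac12\log|z|-\log|1-z|$ into the $(1,2)$ case of the inversion formula produces
\[
J_3(z)=-\tfrac14\bigl(2\Re D_{1,3}(z)+D_{2,2}(z)\bigr),
\]
while the $(3,0)$ case produces $D_3(z)=\tfrac14\Re D_{1,3}(z)-\tfrac18 D_{2,2}(z)+\tfrac{1}{12}\log^3|z|$. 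Comparing the definitions of $\mathcal{L}_3$ and $D_3$ (they differ only in the coefficient of $\log^2|z|\,\Li_1(z)$, namely $2^2B_2/2!=\tfrac13$ versus $\tfrac12$, together with the pure-$\log$ tail) gives $\mathcal{L}_3(z)=D_3(z)+\tfrac16 J_3(z)-\tfrac{1}{12}\log^3|z|$, and substituting the above two expressions yields the clean pointwise relation $\mathcal{L}_3(z)=\tfrac16\bigl(\Re D_{1,3}(z)-D_{2,2}(z)\bigr)$.

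For \eqref{E:EK2} I would then use the functional equation $\mathcal{L}_3(z^{-1})=\mathcal{L}_3(z)$ (valid since $m=3$ is odd) to rewrite
\[
\mathcal{L}^E_{3,1}(x)=\sum_{n\ge 0}\mathcal{L}_3(q^n x)+\sum_{n\ge 1}\mathcal{L}_3(q^n x^{-1}),
\]
substitute the pointwise identity termwise, and compare with the definition of $D_{a,b}(q;x)$. The Bernoulli correction $-\tfrac13\log^3|q|\,B_4(\log|x|/\log|q|)$ is carried identically by $\Re D_{1,3}(q;x)$ and $D_{2,2}(q;x)$, so it cancels in the difference. For \eqref{E:EK3} the same procedure applied to $J_3$ produces $-\tfrac14\bigl(2\Re D_{1,3}(q;x)+D_{2,2}(q;x)\bigr)$ plus a net Bernoulli correction of $-\tfrac14\log^3|q|\,B_4(X)$, where $X:=\log|x|/\log|q|$. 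The extra term in the definition of $\mathcal{L}^E_{3,2}$ rewrites as $\tfrac14\log^3|q|\cdot X^2(1-X)^2$, and the polynomial identity $X^2(1-X)^2-B_4(X)=\tfrac{1}{30}$ then produces exactly $\tfrac{\log^3|q|}{120}$, yielding \eqref{E:EK3}.

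The main obstacle is not conceptual but arithmetic: one has to compute the coefficients $c^{(3)}_{m,a}$ correctly, keep the factor $i$ in $D^*_m$ for even $m$ straight, and carry the Bernoulli correction through both sums. The conceptual core is the algebraic collapse $2\Re D_{1,3}(z)+D_{2,2}(z)=-4J_3(z)$ together with the elementary identity $B_4(X)=X^2(1-X)^2-\tfrac{1}{30}$, which together explain why the elliptic trilogarithms land in the span of $\Re D_{1,3}(q;x)$ and $D_{2,2}(q;x)$ with no residual polynomial in $\log|x|$ surviving.
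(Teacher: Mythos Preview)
Your proof is correct and follows essentially the same approach as the paper: both use the inversion formula of Proposition~\ref{P:Zagier}(ii) at $(m,n)=(3,0)$ and $(m,n)=(1,2)$ to obtain the pointwise identities $\mathcal{L}_3(z)=\tfrac16(\Re D_{1,3}(z)-D_{2,2}(z))$ and $J_3(z)=-\tfrac14(2\Re D_{1,3}(z)+D_{2,2}(z))$, and both rely (yours explicitly, the paper implicitly via the constant $-\log^3|q|/720$) on the identity $X^2(1-X)^2-B_4(X)=\tfrac{1}{30}$. The only cosmetic difference is that you prove \eqref{E:EK2} and \eqref{E:EK3} directly and deduce \eqref{E:EK1}, while the paper proves \eqref{E:EK1} and \eqref{E:EK2} by keeping the $J_3$-sum packaged as $\mathcal{L}^E_{3,2}$.
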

\begin{proof}
It suffices to prove any two equalities of the above, so we will show \eqref{E:EK1} and \eqref{E:EK2} only. First, using the identity (34) in \cite{ZagierII}, one has 
\begin{equation}\label{E:L3}
\mathcal{L}_3(x)=D_3(x)-\frac{\log^2|x|D_1(x)}{6}.
\end{equation}
It then follows from Proposition~\ref{P:Zagier}(ii) that
\begin{equation}\label{E:D3}
D_3(x)=\frac{1}{8}\left(D_{1,3}(x)+D_{3,1}(x)-D_{2,2}(x)\right)+\frac{\log^3|x|}{12}.
\end{equation}
Since $D_1(x)=-\log\left|x^{1/2}-x^{-1/2}\right|$, we can deduce 
\begin{equation*}
\mathcal{L}_3(x)=\frac{1}{8}\left(D_{1,3}(x)+D_{3,1}(x)-D_{2,2}(x)\right)+\frac{J_3(x)}{6}.
\end{equation*}
Next, by simple manipulations and Proposition~\ref{P:Zagier}(i), we have that
\begin{align*}
\mathcal{L}^E_{3,1}(x)&=\sum_{n\in\mathbb{Z}}\mathcal{L}_3(q^n x)\\
&=\frac{1}{8}\sum_{n\in\mathbb{Z}}\left(D_{1,3}(q^n x)+D_{3,1}(q^n x)-D_{2,2}(q^n x)\right)+\frac{1}{6}\sum_{n\in\mathbb{Z}}J_3(q^n x)\\
&=\frac{1}{8}\sum_{n\geq 0}\left(D_{1,3}(q^n x)+D_{3,1}(q^n x)-D_{2,2}(q^n x)\right)+\frac{1}{6}\sum_{n\geq 0}J_3(q^n x)\\
&\quad +\frac{1}{8}\sum_{n\geq 1}\left(D_{1,3}(q^n x^{-1})+D_{3,1}(q^n x^{-1})-D_{2,2}(q^n x^{-1})\right)+\frac{1}{6}\sum_{n\geq 1}J_3(q^n x^{-1})\\
&= \frac{1}{8}\left(D_{1,3}(q;x)+D_{3,1}(q;x)-D_{2,2}(q;x)\right)+\frac{\mathcal{L}^E_{3,2}(x)}{6}-\frac{\log^3|q|}{720}.
\end{align*}
Now we obtain \eqref{E:EK1} by using Proposition~\ref{P:Zagier}(iii) and the fact that $K_{3,1}(\tau;u)=\overline{K_{1,3}(\tau;u)}.$ To prove \eqref{E:EK2}, we again start with the equation \eqref{E:L3}. Applying Proposition~\ref{P:Zagier}(ii) with $m=1$ and $n=2$, we get
\begin{equation*}
D_1(x)\frac{\log^2|x|}{2}=\frac{1}{8}\left(D_{1,3}(x)+D_{3,1}(x)+D_{2,2}(x)\right)+\frac{\log^3|x|}{4}.
\end{equation*}
Therefore, by \eqref{E:D3},
\begin{equation*}
\mathcal{L}_3(x)=\frac{1}{12}\left(D_{1,3}(x)+D_{3,1}(x)-2D_{2,2}(x)\right).
\end{equation*}
Then one can prove \eqref{E:EK2} easily using similar arguments above.
\end{proof}

\begin{corollary}\label{Co:2}
With the same settings in Proposition~\ref{P:Samart}, if $x=e^{2\pi i(\xi\tau+\eta)}$, where $\xi,\eta\in\mathbb{R}/\mathbb{Z}$, then the following identities hold: 
\begin{align}
\mathcal{L}_{3,1}^E(x)&=\frac{2\Im(\tau)^3}{3\pi}\left[\sideset{}{'}\sum_{m,n\in\mathbb{Z}}\frac{e^{2\pi i(n\xi-m\eta)}}{|m\tau+n|^4}-\Re\left(\sideset{}{'}\sum_{m,n\in\mathbb{Z}}e^{2\pi i(n\xi-m\eta)}\frac{(m\tau+n)^2}{|m\tau+n|^6}\right)\right],\label{E:L31}\\
\mathcal{L}_{3,2}^E(x)&=\frac{\Im(\tau)^3}{\pi}\left[\sideset{}{'}\sum_{m,n\in\mathbb{Z}}\frac{e^{2\pi i(n\xi-m\eta)}}{|m\tau+n|^4}+2\Re\left(\sideset{}{'}\sum_{m,n\in\mathbb{Z}}e^{2\pi i(n\xi-m\eta)}\frac{(m\tau+n)^2}{|m\tau+n|^6}\right)\right]+\frac{\log^3|q|}{120} \label{E:L32}.
\end{align}
\end{corollary}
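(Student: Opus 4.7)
The strategy is to combine the three identities of the preceding proposition with Proposition~\ref{P:Zagier}(iii) applied to the pairs $(a,b)=(1,3)$ and $(a,b)=(2,2)$. In both cases $r=a+b-1=3$, so
\begin{equation*}
D_{1,3}(q;x)=\frac{(\tau-\bar\tau)^3}{2\pi i}K_{1,3}(\tau;u),\qquad D_{2,2}(q;x)=\frac{(\tau-\bar\tau)^3}{2\pi i}K_{2,2}(\tau;u).
\end{equation*}
Since $\tau-\bar\tau=2i\Im(\tau)$, we have $(\tau-\bar\tau)^3=-8i\,\Im(\tau)^3$, and hence the common scalar factor reduces to $-4\Im(\tau)^3/\pi$. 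Both $D_{a,b}(q;x)$ are thus real multiples of the corresponding Eisenstein-Kronecker series, and in addition $K_{2,2}(\tau;u)$ is automatically real-valued (pair $(m,n)$ with $(-m,-n)$ to conjugate the summand while preserving the modulus in the denominator), so $D_{2,2}(q;x)$ is real.

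Next I would plug these expressions into the identities \eqref{E:EK2} and \eqref{E:EK3} of the previous proposition. Equation \eqref{E:EK2} gives
\begin{equation*}
\mathcal{L}^E_{3,1}(x)=\tfrac16\left(\Re D_{1,3}(q;x)-D_{2,2}(q;x)\right)=\tfrac{2\Im(\tau)^3}{3\pi}\left(K_{2,2}(\tau;u)-\Re K_{1,3}(\tau;u)\right),
\end{equation*}
and equation \eqref{E:EK3} gives
\begin{equation*}
\mathcal{L}^E_{3,2}(x)=\tfrac{\Im(\tau)^3}{\pi}\left(K_{2,2}(\tau;u)+2\Re K_{1,3}(\tau;u)\right)+\tfrac{\log^3|q|}{120}.
\end{equation*}

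Finally, I would unfold the definition of $K_{a,b}(\tau;u)$ to replace the kernels by their explicit lattice sums. Here the only small observation is that for $K_{1,3}$, the denominator $(m\tau+n)(m\bar\tau+n)^3$ equals $(m\tau+n)^2/|m\tau+n|^6$ after multiplying numerator and denominator by $(m\tau+n)^2$, which produces exactly the shape appearing in \eqref{E:L31} and \eqref{E:L32}; and for $K_{2,2}$ the denominator is simply $|m\tau+n|^4$.

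There is no substantive obstacle: once the scalar $-4\Im(\tau)^3/\pi$ and the reality of $K_{2,2}$ are in hand, the rest is bookkeeping, and the extra $\log^3|q|/120$ in \eqref{E:EK3} carries through unchanged to yield the corresponding term in \eqref{E:L32}.
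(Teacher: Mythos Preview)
Your proposal is correct and follows exactly the approach indicated in the paper, which simply says to use \eqref{E:EK2}, \eqref{E:EK3}, and Proposition~\ref{P:Zagier}(iii); you have filled in the routine details (the scalar $(\tau-\bar\tau)^3/(2\pi i)=-4\Im(\tau)^3/\pi$, the reality of $K_{2,2}$, and the rewriting of the $K_{1,3}$ summand) accurately. One tiny wording slip: it is the reciprocal $1/((m\tau+n)(m\bar\tau+n)^3)$, not the denominator itself, that equals $(m\tau+n)^2/|m\tau+n|^6$, but the computation is right.
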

\begin{proof}
Use \eqref{E:EK2}, \eqref{E:EK3} and Proposition~\ref{P:Zagier}(iii).
\end{proof}
As an easy consequence of \eqref{E:L31}, we have the following result:

\begin{theorem}
\begin{enumerate}
\item[(i)]
Let $E$ be an elliptic curve given by the equation
\begin{equation*}
y^2=4(x-e_1)(x-e_2)(x-e_3),
\end{equation*}
where $e_j\in\mathbb{C}$ are pairwise distinct, and denote by $P_j$ and $O$ the point $(e_j,0)$ and the point at infinity, respectively. Then 
\begin{equation}\label{E:linear1}
\mathcal{L}^E_{3,1}(4(P_1)+4(P_2)+4(P_3)+3(O))=0.
\end{equation} 
\item[(ii)] Let $E$ be an elliptic curve isomorphic to $\mathbb{C}/(\mathbb{Z}+\mathbb{Z}\tau).$ If $P,Q,R,S,$ and $O$ are the points on $E$ corresponding to $\tau/3,1/3,\tau/2,3/4$ and $1$, respectively, via the isomorphism above, then
\begin{equation}
\mathcal{L}^E_{3,1}(9(P)+9(Q)+18(P+Q)+4(O))=0, \label{E:linear2}
\end{equation} 
Moreover, if $\tau$ is purely imaginary, then
\begin{equation}
\mathcal{L}^E_{3,1}(8(S)+8(R+S)-(2S))=0. \label{E:linear3}
\end{equation}
\end{enumerate}
\end{theorem}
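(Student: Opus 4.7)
The plan is to reduce each of the three identities to a cancellation of partial Eisenstein--Kronecker lattice sums using Corollary~\ref{Co:2}. By linearity of $\mathcal{L}^E_{3,1}$, the identity \eqref{E:L31} gives, for any divisor $D = \sum_i n_i (u_i)$ with $u_i = \xi_i \tau + \eta_i$,
\[
\mathcal{L}^E_{3,1}(D) = \frac{2\Im(\tau)^3}{3\pi}\left[\sideset{}{'}\sum_{m,n} \frac{C_D(m,n)}{|m\tau+n|^4} - \Re\sideset{}{'}\sum_{m,n} \frac{C_D(m,n)(m\tau+n)^2}{|m\tau+n|^6}\right],
\]
where $C_D(m,n) := \sum_i n_i e^{2\pi i(n\xi_i - m\eta_i)}$. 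Since $|m\tau+n|^{-4} - \Re(m\tau+n)^2/|m\tau+n|^6 = 2m^2\Im(\tau)^2/|m\tau+n|^6$ termwise, and since pairing $(m,n)$ with $(-m,-n)$ replaces the complex character $e^{2\pi i(n\xi-m\eta)}$ by its real part $\cos 2\pi(n\xi-m\eta)$, this collapses to a single real sum
\[
\mathcal{L}^E_{3,1}(D) = \frac{4\Im(\tau)^5}{3\pi}\sideset{}{'}\sum_{m,n} \frac{m^2\,\Re C_D(m,n)}{|m\tau+n|^6}.
\]
Each claim then amounts to showing this lattice sum vanishes, and $\Re C_D(m,n)$ depends only on the residues $(n\bmod N,\, m\bmod N)$ when $D$ is supported on $N$-torsion.

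For part~(i), the three 2-torsion points correspond to $u=\tau/2,1/2,(\tau+1)/2$, and a direct computation gives $\Re C_D(m,n)=15$ when $m,n$ are both even and $\Re C_D(m,n)=-1$ otherwise. Splitting the lattice accordingly and rescaling $(m,n)=(2m',2n')$ on the even class produces a factor $1/2^4=1/16$ there. The identity $15\cdot\frac{1}{16}+(-1)\cdot\frac{15}{16}=0$ then gives \eqref{E:linear1}.

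For part~(ii), the same template applies modulo $3$ and $4$. For \eqref{E:linear2}, one finds $\Re C_D(m,n)=4+9c_a+9c_b+18c_{a-b}$ with $c_k=\cos(2\pi k/3)$ and $(a,b)=(n\bmod 3,m\bmod 3)$. Organising the lattice by these residue classes, the $3$-torsion distribution relation (together with the symmetry $(m,n)\mapsto(-m,-n)$ used above) collapses the expression to a multiple of a difference of partial lattice sums $W(1,1)-W(1,2)$, where $W(a,b):=\sideset{}{'}\sum_{n\equiv a,\,m\equiv b\,(3)} m^2/|m\tau+n|^6$. The final equality is then obtained by the reflection $m\mapsto -m$, which identifies the class $(n\equiv 1,m\equiv 2)$ with $(n\equiv 1,m\equiv 1)$ up to replacing $|m\tau+n|$ by $|m\tau-n|$. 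For \eqref{E:linear3}, the analogous reduction on $E[4]$ produces the corresponding difference of $W$-sums, which vanishes because the purely imaginary hypothesis yields $|m\tau+n|^2 = m^2|\tau|^2 + n^2$, invariant under $m\mapsto -m$.

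The main obstacle is the combinatorial organisation of the partial lattice sums in the $3$- and $4$-torsion cases: there are many residue classes to track, the character values take several distinct real values, and one must exploit every available lattice reflection to produce the final cancellation. Part~(i) is a clean template showing how the distribution relation together with the $(m,n)\mapsto(-m,-n)$ symmetry already suffices; the higher-torsion cases additionally require the pointwise identity relating $K_{2,2}$ and $\Re(K_{1,3})$ and, for \eqref{E:linear3}, the extra $m\mapsto-m$ symmetry arising from $\tau\in i\mathbb{R}$.
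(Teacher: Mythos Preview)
Your approach is the same as the paper's: evaluate $\mathcal{L}^E_{3,1}$ on the given divisors via Corollary~\ref{Co:2}, organise the resulting Eisenstein--Kronecker sums by residue classes, and cancel. Your preliminary collapse
\[
\mathcal{L}^E_{3,1}(D)=\frac{4\Im(\tau)^5}{3\pi}\sideset{}{'}\sum_{m,n}\frac{m^2\,\Re C_D(m,n)}{|m\tau+n|^6}
\]
is a genuine streamlining over the paper, which carries both the $K_{2,2}$- and $\Re K_{1,3}$-pieces separately throughout. Your argument for~(i) is correct and slightly cleaner than the paper's: indeed $\Re C_D$ is $15$ on the both-even class and $-1$ elsewhere, and $15\cdot\tfrac{1}{16}-1\cdot\tfrac{15}{16}=0$ finishes it.

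There is, however, a real gap in your treatment of~\eqref{E:linear2}. Your reduction to $27\bigl(W(1,1)-W(1,2)\bigr)$ is correct (with $40W(0,0)=W(0,1)+W(1,0)+W(1,1)+W(1,2)$ coming from the rescaling $(m,n)=(3m',3n')$). But your final step, the reflection $m\mapsto -m$, sends $|m\tau+n|$ to $|m\tau-n|$, as you yourself note; for general $\tau$ these are \emph{not} equal, so you have not shown $W(1,1)=W(1,2)$. In fact this step works precisely when $\Re\tau=0$. Your closing paragraph suggests you believe \eqref{E:linear2} follows without the purely imaginary hypothesis, reserving it only for \eqref{E:linear3}; that is not borne out by your own argument. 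It is worth knowing that the paper's proof has exactly the same limitation: it explicitly sets $\tau=yi$ before computing \eqref{E:linear2}, so the identity as stated for arbitrary $\tau$ is not established there either. You should either add the hypothesis $\tau\in i\mathbb{R}$ to your argument for \eqref{E:linear2}, or supply a different mechanism for $W(1,1)=W(1,2)$ valid for all $\tau$ (if one exists).

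For \eqref{E:linear3}, your sketch (``the analogous reduction on $E[4]$ produces the corresponding difference of $W$-sums'') is too vague to assess line by line, but the strategy is the paper's and is sound once $\tau\in i\mathbb{R}$ is used; filling in the residue-class table for $\Re C_D(m,n)$ modulo $4$ and applying the same rescaling/reflection bookkeeping will close it.
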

\begin{proof}
We shall prove (i) first. Denote by $\omega_1$ and $\omega_2$ the real and complex periods of $E$ and let $\Lambda=\mathbb{Z}\omega_1+\mathbb{Z}\omega_2.$ Then it follows from a well-known fact about evaluations of $\wp_\Lambda$ and $\wp_\Lambda'$ at the half-periods of $\Lambda$ that 
\begin{equation*}
\left\{(\wp_\Lambda(u),\wp_\Lambda'(u))\mid u=\frac{\omega_1}{2},\frac{\omega_2}{2},\frac{\omega_1+\omega_2}{2}\right\}=\{P_1,P_2,P_3\}.
\end{equation*}
Let $\tau=\omega_2/\omega_1$. Then, by using \eqref{E:L31}, we find that
\begin{align*}
\mathcal{L}^E_{3,1}((P_1)+(P_2)+(P_3))&=\frac{2\Im(\tau)^3}{3\pi}\Biggl[\sideset{}{'}\sum_{m,n\in\mathbb{Z}}\frac{(-1)^n+(-1)^m+(-1)^{m+n}}{|m\tau+n|^4}\\
&\qquad -\Re\left(\sideset{}{'}\sum_{m,n\in\mathbb{Z}}\left((-1)^n+(-1)^m+(-1)^{m+n}\right)\frac{(m\tau+n)^2}{|m\tau+n|^6}\right)\Biggr]\\&=\frac{2\Im(\tau)^3}{3\pi}\Biggl[\Re\left(\sideset{}{'}\sum_{m,n\in\mathbb{Z}}\frac{(m\tau+n)^2}{|m\tau+n|^6}-4\sideset{}{'}\sum_{\substack{m \text{ even}\\n \text{ even}}}\frac{(m\tau+n)^2}{|m\tau+n|^6}\right)\\
&\qquad -\left(\sideset{}{'}\sum_{m,n\in\mathbb{Z}}\frac{1}{|m\tau+n|^4}-4\sideset{}{'}\sum_{\substack{m \text{ even}\\n \text{ even}}}\frac{1}{|m\tau+n|^4}\right)\Biggr]\\
&=\frac{\Im(\tau)^3}{2\pi}\Biggl[\Re\left(\sideset{}{'}\sum_{m,n\in\mathbb{Z}}\frac{(m\tau+n)^2}{|m\tau+n|^6}\right)-\sideset{}{'}\sum_{m,n\in\mathbb{Z}}\frac{1}{|m\tau+n|^4}\Biggr]\\
&=-\frac{3}{4}\mathcal{L}^E_{3,1}(O),
\end{align*}
so \eqref{E:linear1} follows. Next, suppose that $E\cong \mathbb{C}/(\mathbb{Z}+\mathbb{Z}\tau).$ Using \eqref{E:L31} and the fact that $\mathcal{L}^E_{3,1}$ is a real-valued function, one has that
\begin{align*}
\mathcal{L}^E_{3,1}(8(S)+8(R+S))&=\frac{16y^3}{3\pi}\Biggl[\sideset{}{'}\sum_{m,n\in\mathbb{Z}}\frac{i^m\left(1+(-1)^n\right)}{|m\tau+n|^4}\\
&\qquad -\Re\left(\sideset{}{'}\sum_{m,n\in\mathbb{Z}}\left(i^m\left(1+(-1)^n\right)\right)\frac{(m\tau+n)^2}{|m\tau+n|^6}\right)\Biggr]\\
&=\frac{16y^3}{3\pi}\Biggl[\sideset{}{'}\sum_{m,n\in\mathbb{Z}}\frac{(-1)^m\left(1+(-1)^n\right)}{|2m\tau+n|^4}\\
&\qquad -\sideset{}{'}\sum_{m,n\in\mathbb{Z}}\left((-1)^m\left(1+(-1)^n\right)\right)\frac{n^2-4y^2m^2}{|2m\tau+n|^6}\Biggr]\\
&=\frac{2y^3}{3\pi}\sideset{}{'}\sum_{m,n\in\mathbb{Z}}\left(\frac{(-1)^m}{|m\tau+n|^4}-\frac{(-1)^m(n^2-y^2m^2)}{|m\tau+n|^6}\right)\\
&=\mathcal{L}^E_{3,1}(2S),
\end{align*}
which yields \eqref{E:linear3}. On the other hand, if $\omega=e^{2\pi i/3}$ and $\tau=y i$, where $y\in\mathbb{R},$ then
\begin{align*}
\mathcal{L}^E_{3,1}(9(P)+9(Q)+18(P+Q)&+4(O))=\frac{2y^3}{3\pi}\Biggl[\sideset{}{'}\sum_{m,n\in\mathbb{Z}}\frac{9\omega^n+9\omega^{-m}+18\omega^{n-m}+4}{|m\tau+n|^4}\\
&\qquad -\Re\left(\sideset{}{'}\sum_{m,n\in\mathbb{Z}}\left(9\omega^n+9\omega^{-m}+18\omega^{n-m}+4\right)\frac{(m\tau+n)^2}{|m\tau+n|^6}\right)\Biggr]\\
&=\frac{2y^5}{3\pi}\sideset{}{'}\sum_{m,n\in\mathbb{Z}}\frac{\left(80-27|\chi(n)|-27|\chi(m)|-54|\chi(n-m)|\right)m^2}{(n^2+y^2m^2)^3},
\end{align*} 
where $\chi$ is the quadratic character of conductor $3$. To obtain the last latter equality, we use the identity 
\begin{equation}\label{E:omega}
\omega^n = -\frac{3}{2}|\chi(n)|+1+\chi(n)\frac{\sqrt{3}}{2}i, \qquad n\in\mathbb{Z}.
\end{equation}
Now it can be shown that the last series above vanishes by considering $80-27|\chi(n)|-27|\chi(m)|-54|\chi(n-m)|$ for each $m$ and $n$ modulo $3$.
\end{proof}

\section{Connection with Mahler measures}\label{Sec:MM}
From here on, we will adopt the following notations:
\begin{align*}
n_2(s)&:=2m((x+x^{-1})(y+y^{-1})(z+z^{-1})+s^{1/2}),\\
n_3(s)&:=m\left((x+x^{-1})^2(y+y^{-1})^2(1+z)^3z^{-2}-s\right),\\
n_4(s)&:=4m\left(x^4+y^4+z^4+1+s^{1/4}xyz\right),\\
E_s&: y^2=(x-1)\left(x^2-\frac{s}{s-64}\right),\\
F_s&: x^3+y^3+1-rxy=0,  \qquad r=\sqrt[3]{\frac{s+\sqrt{s(s-108)}}{2}},\\
G_s&: y^2=(x-1)(x-r')(x+r'),  \qquad r'=\sqrt{\frac{1+\sqrt{1-\frac{256}{s}}}{2}},\\
\chi_{D}(n)&=\left(\frac{D}{n}\right),\qquad d_k:=L'(\chi_{-k},-1),\qquad M_N := L'(g_N,0), 
\end{align*}
where $g_N$ is a normalized newform with rational Fourier coefficients in $S_3(\Gamma_0(N),\chi_{-N})$. 
The main goal of this section is to give a proof of Theorem~\ref{T:Main}. The key idea of the proof is to use the fact that when $s$ is properly parametrized, the Mahler measures $n_j(s), j=2,3,4$ can be expressed as Eisenstein-Kronecker series, which turns out to equal the series obtained from the right-hand sides of \eqref{E:Main},\eqref{E:Main2},\eqref{E:Main31}, and \eqref{E:Main32}. Let us first state a modified version of \cite[Prop.~2.1]{SamartII} below. 
\begin{proposition}\label{P:Samart}
For $\tau\in\mathcal{H},$ let $q=q(\tau):=e^{2\pi i\tau}$ and denote
\begin{align*}
s_2(q)&=-\frac{\Delta\left(\frac{2\tau+1}{2}\right)}{\Delta(2\tau+1)},\\
s_3(q)&=\left(27\left(\frac{\eta(3\tau)}{\eta(\tau)}\right)^6+\left(\frac{\eta(\tau)}{\eta(3\tau)}\right)^6\right)^2,\\
s_4(q)&=\frac{\Delta(2\tau)}{\Delta(\tau)}\left(16\left(\frac{\eta(\tau)\eta(4\tau)^2}{\eta(2\tau)^3}\right)^4+\left(\frac{\eta(2\tau)^3}{\eta(\tau)\eta(4\tau)^2}\right)^4\right)^4,
\end{align*}
where $\Delta(\tau)=\eta^{24}(\tau)$ and $\eta(\tau)$ is the Dedekind eta function. 
\begin{enumerate}
\item[(i)]
If $\tau=y_1i$ or $\tau=1/2+y_2i$, where $y_1\in[1/2,\infty)$ and $y_2\in(0,\infty)$, then  
\begin{multline*}
n_2(s_2(q))=\frac{2\Im(\tau)}{\pi^3}\sideset{}{'}\sum_{m,n\in \mathbb{Z}}\biggl(16\left(\frac{4(4n\Re(\tau)+m)^2}{[(4n\tau+m)(4n\bar{\tau}+m)]^3}-\frac{1}{[(4n\tau+m)(4n\bar{\tau}+m)]^2}\right)\\
-\left(\frac{4(n\Re(\tau)+m)^2}{[(n\tau+m)(n\bar{\tau}+m)]^3}-\frac{1}{[(n\tau+m)(n\bar{\tau}+m)]^2}\right)\biggr).
\end{multline*}
\item[(ii)]
If $\tau=yi$, where $y\in[1/\sqrt{3},\infty)$ then 
\begin{multline*}
n_3(s_3(q))=\frac{15\Im(\tau)}{4\pi^3}\sideset{}{'}\sum_{m,n\in \mathbb{Z}}\biggl(9\left(\frac{4(3n\Re(\tau)+m)^2}{[(3n\tau+m)(3n\bar{\tau}+m)]^3}-\frac{1}{[(3n\tau+m)(3n\bar{\tau}+m)]^2}\right)\\
-\left(\frac{4(n\Re(\tau)+m)^2}{[(n\tau+m)(n\bar{\tau}+m)]^3}-\frac{1}{[(n\tau+m)(n\bar{\tau}+m)]^2}\right)\biggr).
\end{multline*}
\item[(iii)]
If $\tau=y_1i$ or $\tau=1/2+y_2i$, where $y_1\in[1/\sqrt{2},\infty)$ and $y_2\in(1/2,\infty)$, then  
\begin{multline*}
n_4(s_4(q))=\frac{10\Im(\tau)}{\pi^3}\sideset{}{'}\sum_{m,n\in \mathbb{Z}}\biggl(4\left(\frac{4(2n\Re(\tau)+m)^2}{[(2n\tau+m)(2n\bar{\tau}+m)]^3}-\frac{1}{[(2n\tau+m)(2n\bar{\tau}+m)]^2}\right)\\
-\left(\frac{4(n\Re(\tau)+m)^2}{[(n\tau+m)(n\bar{\tau}+m)]^3}-\frac{1}{[(n\tau+m)(n\bar{\tau}+m)]^2}\right)\biggr).
\end{multline*}
\end{enumerate} 
\end{proposition}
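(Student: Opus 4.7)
The plan is to establish Proposition~\ref{P:Samart} as a direct modification of \cite[Prop.~2.1]{SamartII}, where the Mahler measures $n_2$, $n_3$, $n_4$ were expressed as Eisenstein--Kronecker lattice sums in a slightly different form. The argument splits into three stages.

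\emph{Stage 1: recall the shape of the formulas from SamartII.} For each $j\in\{2,3,4\}$, the parameter $s=s_j(q)$ is a modular function whose level structure ($N_2=4$, $N_3=3$, $N_4=2$) encodes the Shioda--Inose data of the $K3$ hypersurface $P_j(x,y,z)=s^{1/j}$. Starting from $n_j(s)=c_j\,m(P_j-s^{1/j})$, I would apply Jensen's formula once (in the $z$-variable) and then interpret the resulting two-dimensional integral as the real part of a regulator pairing on the elliptic fibre. Pulling this pairing back along $\tau\mapsto s_j(q)$ produces a formula of the form
\begin{equation*}
n_j(s_j(q)) = A_j\,\Im(\tau) \sideset{}{'}\sum_{m,n\in\ZZ} \Bigl[N_j^2\,h(N_j n,m;\tau) - h(n,m;\tau)\Bigr],
\end{equation*}
where $A_j\in\{2/\pi^3,\,15/(4\pi^3),\,10/\pi^3\}$ matches the prefactor stated in the proposition and $h(k,m;\tau)$ is a specific real combination of the Eisenstein--Kronecker kernels $1/((k\tau+m)(k\bar\tau+m)^3)$ and $1/|k\tau+m|^4$. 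This stage is what \cite{SamartII} establishes in full.

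\emph{Stage 2: algebraic rewriting of the summand.} The elementary identity $4\Re(w)^2 = w^2+\bar w^2 + 2|w|^2$ applied to $w=k\tau+m$ yields
\begin{equation*}
2\,\Re\!\left[\frac{1}{(k\tau+m)(k\bar\tau+m)^3}\right] + \frac{1}{|k\tau+m|^4}
\;=\;
\frac{4(k\Re(\tau)+m)^2}{[(k\tau+m)(k\bar\tau+m)]^3} - \frac{1}{[(k\tau+m)(k\bar\tau+m)]^2}.
\end{equation*}
Setting $k=n$ or $k=N_j n$ and substituting into the display of Stage~1 matches the sum term-for-term with the right-hand sides in (i)--(iii), so the reformulation in the present proposition is just an algebraic manipulation of the SamartII formula.

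\emph{Stage 3: admissible $\tau$-ranges and main obstacle.} Finally I would verify that the $\tau$-rays listed in each case, $\tau=y_1 i$ or $\tau=\tfrac{1}{2}+y_2 i$ with the stated bounds on $y_1,y_2$, correspond precisely to the loci where $s_j(q)$ is real and lies in the interval on which $n_j(s)$ is well-defined with a fixed branch of $s^{1/j}$: $s_2(q)\in\RR\setminus[0,64]$, $s_3(q)\in[108,\infty)$, $s_4(q)\in\RR\setminus[0,256]$. These checks use the eta-quotient expressions for $s_j(q)$ together with, for $j=2$ and $j=4$, the Atkin--Lehner involution exchanging the two imaginary rays. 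The genuine obstacle is Stage~1 — the Jensen reduction plus the regulator interpretation that carries a three-variable Mahler measure into a two-dimensional period on the elliptic fibre and then into an Eisenstein--Kronecker series — but since we cite SamartII for this, the remaining work is the trivial algebra of Stage~2 and the branch analysis of Stage~3, which requires care because each family has a different singular set and a different ramification pattern of $s\mapsto s^{1/j}$.
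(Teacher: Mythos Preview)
Your proposal is correct and follows the paper's approach: the lattice-sum identity is taken directly from \cite[Prop.~2.1]{SamartII} (so your Stages~1--2 are exactly what the paper cites), and the only new content is the extension of the admissible $\tau$-range. For your Stage~3 the paper proceeds slightly more structurally than a branch-by-branch interval check: it identifies $s_2(q)=j_4^*(\tau)+24$, $s_3(q)=j_3^*(\tau)+42$, $s_4(q)=j_2^*(\tau)+104$ as shifts of the Hauptmoduln $j_N^*$ for $\Gamma_0(N)^*$ ($N=4,3,2$ respectively), notes that the listed $\tau$-values lie in a fundamental domain for the relevant $\Gamma_0(N)^*$, and then appeals to the argument of \cite[\S14]{RV}---this is precisely your Atkin--Lehner observation, just packaged uniformly.
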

\begin{proof}
The main part of the proof follows directly from that of \cite[Prop.~2.1]{SamartII}. Then the assumption can be slightly modified by using the fact that $s_2(q)=j_4^*(\tau)+24, s_3(q)=j_3^*(\tau)+42,$ and $s_4(q)=j_2^*(\tau)+104$ where
\begin{align*}
j_2^*(\tau)&=\left(\frac{\eta(\tau)}{\eta(2\tau)}\right)^{24}+24+2^{12}\left(\frac{\eta(2\tau)}{\eta(\tau)}\right)^{24},\\
j_3^*(\tau)&=\left(\frac{\eta(\tau)}{\eta(3\tau)}\right)^{12}+12+3^{6}\left(\frac{\eta(3\tau)}{\eta(\tau)}\right)^{12},\\
j_4^*(\tau)&=\left(\frac{\eta(\tau)}{\eta(4\tau)}\right)^8+8+4^4\left(\frac{\eta(4\tau)}{\eta(\tau)}\right)^8.
\end{align*}
It is well established that for each $N=2,3,4,$ $j_N^*(\tau)$ is a Hauptmodul associated to the genus zero subgroup $\Gamma_0(N)^*$ of $GL_2(\mathbb{R})$ generated by $\Gamma_0(N)$ and the Atkin-Lehner involutions $W_p,p|N$.
(see, for example, \cite[\S8]{ZagierIII}). Since the values of $\tau$ in the assumptions above lie in a corresponding fundamental domain for $\Gamma_0(N)^*$, the proposition follows by similar arguments in \cite[\S 14]{RV}.
\end{proof}

Before proving the main theorem, let us briefly discuss the significance of the families of elliptic curves $E_s, F_s,$ and $G_s$, which appear in the theorem. Note first that $(x+x^{-1})(y+y^{-1})(z+z^{-1})+s^{1/2}=0$ defines a family of $K3$ surfaces, say $X_s$, which has generic Picard number $19$, and $E_s$ is a family of elliptic curves giving rise to a Shioda-Inose structure of $X_s$. For many algebraic values of $s$, $E_s$ is a CM elliptic curve, so $X_s$ is singular; i.e. it has Picard number $20$, and $n_2(s)$ is (conjecturally) equal to a rational linear combination of $L$-values of types $M_N$ and $d_k$. Some of these formulas can be proved using Proposition~\ref{P:Samart}. However, for values of $s$ such that $E_s$ is non-CM, no Mahler measure $n_2(s)$ is provably known to be related to special values of $L$-functions. For more details, see \cite[\S 4]{Samart}. This is a part of the initial motivation to find a general formula for $n_2(s)$, which potentially involves $E_s$. \\
\indent Similarly, the zero loci of the other two families of polynomials also define one-parameter families of $K3$ hypersurfaces, namely
\begin{align*}
Y_s&: (x+x^{-1})^2(y+y^{-1})^2(1+z)^3z^{-2}-s=0,\\
Z_s&: x^4+y^4+z^4+1+s^{1/4}xyz=0.
\end{align*}
Furthermore, it can be shown that the family of elliptic curves $G_s$ gives rise to a Shioda-Inose structure associated to the quartic surfaces $Z_s$ by the following arguments:
Let $\mu=1/s$. Consider the integral 
\begin{equation*}
w_0(\mu):=\frac{1}{(2\pi i)^3}\int_{\mathbb{T}^3}\frac{1}{1-\mu^{1/4}\left(\frac{x^4+y^4+z^4+1}{xyz}\right)}\frac{dx}{x}\frac{dy}{y}\frac{dz}{z},
\end{equation*}
which can be realized as a formal period $\int_\gamma \omega_s$, where $\omega_s$ is a holomorphic $2$-form and $\gamma$ is a $2$-cycle on 
$Z_s$. Then using the Taylor series expansion and combinatorial arguments, one can find easily that for $|\mu|$ sufficiently small 
\begin{equation*}
w_0(\mu)=\pFq{3}{2}{\frac{1}{4},\frac{1}{2},\frac{3}{4}}{1,1}{256\mu},
\end{equation*}
where  
\begin{equation*}
\pFq{p}{q}{a_1,a_2,\ldots,a_p}{b_1,b_2,\ldots,b_q}{x}=\sum_{n=0}^{\infty}\frac{(a_1)_n\cdots(a_p)_n}{(b_1)_n\cdots(b_q)_n}\frac{x^n}{n!}
\end{equation*}
and $(c)_n=\Gamma(c+n)/\Gamma(c)$. Therefore, $w_0(\mu)$ satisfies the differential equation
\begin{equation}\label{E:PFquartic}
\mu^2(256\mu-1)\frac{d^3w}{d\mu^3}+\mu(1152\mu-3)\frac{d^2w}{d\mu^2}+(816\mu-1)\frac{dw}{d\mu}+24w=0.
\end{equation}
In other words, \eqref{E:PFquartic} is the Picard-Fuchs equation of the quartic surfaces. By direct calculation, one sees that \eqref{E:PFquartic} is the symmetric square of the second-order differential equation
\begin{equation}\label{E:PFClausen}
\mu(256\mu-1)\frac{d^2w}{d\mu^2}+(384\mu-1)\frac{dw}{d\mu}+12w=0,
\end{equation}
whose non-holomorphic solution around $\mu=0$ is 
$$\pFq{2}{1}{\frac{1}{4},\frac{3}{4}}{1}{\frac{1+\sqrt{1-256\mu}}{2}}.$$
It was obtained in the proof of \cite[Cor.~2.2]{McCarthy} that if $E_\lambda$ denotes the \textit{Clausen form} elliptic curves $$y^2=(x-1)(x^2+\lambda),\qquad \lambda\notin\{0,-1\},$$ then the real period $\Omega(E_\lambda)$ of $E_\lambda$ is 
\begin{equation*}
\Omega(E_\lambda)=\pi \pFq{2}{1}{\frac{1}{4},\frac{3}{4}}{1}{-\lambda}.
\end{equation*}
Hence \eqref{E:PFClausen} is the Picard-Fuchs equation of the family $G_s$, and this family of elliptic curves is associated to the $K3$ surfaces $Z_s$ by a Shioda-Inose structure (see \cite[\S 5]{LongII}). It is also worth mentioning that if we set $s=-2^{10}u^4/(u^4-1)^2$, then the $j$-function of the family $G_s$ is given by
\begin{equation*}
j(G_{s(u)})=\frac{64(u^2+3)^3(3u^2+1)^3}{(u^2-1)^4(u^2+1)^2},
\end{equation*}
which coincides with Long's result \cite[5.15]{LongI}.\\
\indent Finally, consider the family $Y_s.$ Again, its formal period is 
\begin{align*}
v_0(\mu):&=\frac{1}{(2\pi i)^3}\int_{\mathbb{T}^3}\frac{1}{1-\mu(x+x^{-1})^2(y+y^{-1})^2(1+z)^3z^{-2}}\frac{dx}{x}\frac{dy}{y}\frac{dz}{z}\\
&=\pFq{3}{2}{\frac{1}{3},\frac{1}{2},\frac{2}{3}}{1,1}{108\mu},
\end{align*}
which is a solution of
\begin{equation*}
\mu^2(108\mu-1)\frac{d^3v}{d\mu^3}+3\mu(162\mu-1)\frac{d^2v}{d\mu^2}+(348\mu-1)\frac{dv}{d\mu}+12v=0.
\end{equation*}
Recall from \cite[\S 14]{RV} that the Picard-Fuchs equation of the reparametrized \textit{Hesse form} elliptic curves $F_s$ is
\begin{equation*}
\mu(108\mu-1)\frac{d^2v}{d\mu^2}+(162\mu-1)\frac{dv}{d\mu}+6v=0.
\end{equation*}
By the same arguments above, it can be shown that the family $F_s$ of elliptic curves gives rise to a Shioda-Inose structure of $Y_s$.\\

The results below are essentially required in the proof of our main theorem.
\begin{lemma}\label{L:period}
For $t=2,3,$ and $4$, we denote 
$$\F_t(z):=\pFq{2}{1}{\frac{1}{t},\frac{t-1}{t}}{1}{z}.$$
\begin{enumerate}
\item [(i)]
Let $a,b,c\in\mathbb{R}$ be such that $a>b>c$ and let $E$ be the elliptic curve $y^2=(x-a)(x-b)(x-c).$ Then $E$ is isomorphic to $\mathbb{C}/(\mathbb{Z}+\mathbb{Z}\tau),$ where 
$$\tau=\frac{\F_2\left(\frac{a-b}{a-c}\right)}{\F_2\left(\frac{b-c}{a-c}\right)}i.$$
\item [(ii)]
Let $k\in (3,\infty)$ and let $E$ be the elliptic curve $x^3+y^3+1-kxy=0.$ Then $E$ is isomorphic to $\mathbb{C}/(\mathbb{Z}+\mathbb{Z}\tau),$ where 
$$\tau=\sqrt{3}\frac{\F_3\left(\frac{27}{k^3}\right)}{\F_3\left(1-\frac{27}{k^3}\right)}i.$$
\end{enumerate}
\end{lemma}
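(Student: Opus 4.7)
The plan for part (i) is the classical computation of the periods of $E: y^2=(x-a)(x-b)(x-c)$ via Euler's integral representation of ${}_2F_1$. The lattice of periods $\Lambda$ of the complex uniformization is generated by
$$\omega_1 = 2\int_a^{\infty}\frac{dx}{\sqrt{(x-a)(x-b)(x-c)}},\qquad \omega_2 = 2i\int_b^{a}\frac{dx}{\sqrt{(a-x)(x-b)(x-c)}}.$$
For $\omega_1$ I would substitute $u=(a-c)/(x-c)$, which maps $[a,\infty)$ bijectively onto $[0,1]$ and transforms the integral into
$$\omega_1 = \frac{2}{\sqrt{a-c}}\int_0^1\frac{du}{\sqrt{u(1-u)\bigl(1-\tfrac{b-c}{a-c}u\bigr)}} = \frac{2\pi}{\sqrt{a-c}}\,\F_2\!\left(\frac{b-c}{a-c}\right),$$
where the second equality is Euler's integral representation with $(\alpha,\beta,\gamma)=(\tfrac12,\tfrac12,1)$. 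The analogous substitution $u=(a-x)/(a-b)$ on $[b,a]$ gives $\omega_2=\tfrac{2\pi i}{\sqrt{a-c}}\,\F_2\!\left(\tfrac{a-b}{a-c}\right)$. Because $a>b>c$, the integrand defining $\omega_1$ is real positive and the integrand defining $\omega_2/i$ is real positive, so $\tau=\omega_2/\omega_1\in i\mathbb{R}_{>0}\subset\mathcal{H}$ has the claimed form, and $E(\mathbb{C})\cong\mathbb{C}/\Lambda\cong\mathbb{C}/(\mathbb{Z}+\mathbb{Z}\tau)$ via $u\mapsto u/\omega_1$.

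For part (ii) I would work through the Picard-Fuchs equation of the Hesse pencil $F_k:x^3+y^3+1-kxy=0$. A standard calculation, indicated in Rodriguez Villegas \cite[\S 14]{RV}, shows that the periods of the canonical holomorphic differential $\omega_k=dx/(3y^2-kx)$ on $F_k$, viewed as functions of $z=27/k^3$, satisfy the Gauss hypergeometric equation with parameters $(\tfrac13,\tfrac23,1)$. Since $\F_3(z)$ and $\F_3(1-z)$ are two linearly independent solutions on the interval $z\in(0,1)$ (equivalently $k\in(3,\infty)$), expanding the two fundamental periods of $F_k$ in this basis yields
$$\omega_1 = C\,\F_3(1-z),\qquad \omega_2 = \sqrt{3}\,C\,i\,\F_3(z)$$
for a common nonzero constant $C=C(k)$, from which the stated formula for $\tau=\omega_2/\omega_1$ follows.

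The routine part of the argument is part (i). The essential difficulty in part (ii) is pinning down the numerical factor $\sqrt{3}$ in front of $i$: identifying the solutions of the Picard-Fuchs equation only determines $\tau$ up to a $\mathrm{GL}_2(\mathbb{R})$ change of basis of the solution space, and the $\sqrt{3}$ is a genuine normalization reflecting the ramification of the modular cover attached to the Hesse pencil. The cleanest way to secure this constant is either by specialization to a CM value of $k$ (for instance $k=6$, giving $z=1/8$) where $\tau$ is known independently from the $j$-invariant and period lattice of the corresponding CM elliptic curve, or by invoking Kummer's connection formulas between hypergeometric solution bases at $z=0$ and $z=1$; in the latter approach the $\sqrt{3}$ ultimately traces back to the identity $\Gamma(\tfrac13)\Gamma(\tfrac23)=2\pi/\sqrt{3}$.
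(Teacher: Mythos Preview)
Your argument for part (i) is correct and takes a more direct route than the paper: you compute the period integrals via Euler's integral representation of ${}_2F_1$, whereas the paper first expresses $\omega_1,\omega_2$ through the arithmetic--geometric mean (citing Cohen's algorithm) and then invokes the identity $\AGM(\alpha,\beta)=\alpha/\F_2\bigl(1-(\beta/\alpha)^2\bigr)$ to pass to hypergeometric form. Both are standard; yours is arguably the more elementary of the two.

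For part (ii) your Picard--Fuchs strategy is sound in outline, but the paper proceeds quite differently and in a way that avoids the normalization headache you flag. Rather than identifying the periods as specific hypergeometric solutions, the paper simply sets $\tau_2=\sqrt{3}\,i\,\F_3(27/k^3)/\F_3(1-27/k^3)$ and proves $j(\tau_2)=j(E)$. This is done via the generalized Weber function $\mathfrak{g}_3(\tau)=\sqrt{3}\,\eta(3\tau)/\eta(\tau)$: one has an algebraic relation expressing $j(\tau)$ in terms of $\mathfrak{g}_3^{12}(\tau)$, and Ramanujan's signature-$3$ inversion $s_3(q_3(\alpha))=27/(\alpha(1-\alpha))$ lets one evaluate $\mathfrak{g}_3^{12}(-1/\tau_2)$ explicitly as $729/(k^3-27)$, whence $j(\tau_2)=\bigl(k(k^3+216)/(k^3-27)\bigr)^3$, which matches $j(E)$ by direct computation. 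The advantage of the paper's approach is that the $\sqrt{3}$ is baked into the definition of $\tau_2$ and never has to be ``secured''; the disadvantage is that it requires the Weber--Ramanujan machinery. Your approach is conceptually cleaner but, as you acknowledge, leaves the constant to be pinned down by a separate argument (CM specialization or Kummer connection formulas), which you sketch but do not carry out. Either completion would work; the $j$-invariant match is effectively a third option you might add to your list.
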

\begin{proof}
First, consider the elliptic curve $E: y^2=(x-a)(x-b)(x-c),$ where $a>b>c$. Let $\omega_1$ and $\omega_2$ be the real and complex periods of $E$, respectively, and let $\tau_1=\omega_2/\omega_1.$ Then we can find $\omega_1$ and $\omega_2$ using the following formulas (see, for example, \cite[Algorithm~7.4.7]{Cohen}):
\begin{equation*}
\omega_1=\frac{\pi}{\AGM(\sqrt{a-c},\sqrt{a-b})},\qquad \omega_2=\frac{i \pi}{\AGM(\sqrt{a-c},\sqrt{b-c})},
\end{equation*}
where $\AGM(\alpha,\beta)$ denotes the arithmetic-geometric mean of $\alpha$ and $\beta$, defined as follows: Let $(a_n)_{n=1}^{\infty}$ and $(b_n)_{n=1}^{\infty}$ be sequences given by $a_1=\alpha,b_1=\beta,$ and $a_{n+1}=(a_n+b_n)/2,b_{n+1}=\sqrt{a_n b_n}$ for $n\geq 1$. Then these two sequences converge to the same number, and  we call this number $\AGM(\alpha,\beta)$. 

By \cite[(3.5)]{McCarthy}, one has that the $\AGM$ can be represented by a $_2F_1$-hypergeometric series, namely,
\begin{equation}\label{E:Dermot}
\AGM(\alpha,\beta)=\frac{\alpha}{\F_2\left(1-\left(\frac{\beta}{\alpha}\right)^2\right)}.
\end{equation} 
Therefore, we have immediately that
\begin{equation*}
\tau_1=\frac{\AGM(\sqrt{a-c},\sqrt{a-b})}{\AGM(\sqrt{a-c},\sqrt{b-c})}i=\frac{\F_2\left(\frac{a-b}{a-c}\right)}{\F_2\left(\frac{b-c}{a-c}\right)}i,
\end{equation*}
and (i) is proved. 

Now for a given $k\in (3,\infty)$, let $\tau_2=\left(\sqrt{3}\F_3\left(\frac{27}{k^3}\right)/\F_3\left(1-\frac{27}{k^3}\right)\right)i,$ and let $E$ be the elliptic curve defined by $x^3+y^3+1-kxy=0.$ To establish (ii), we will show that $j(E)=j(\tau_2),$ where the latter $j$ is the usual $j$-invariant. Let us first introduce a generalized Weber function 
\begin{equation*}
\mathfrak{g}_3(\tau)=\sqrt{3}\frac{\eta(3\tau)}{\eta(\tau)}.
\end{equation*}
It is a classical result due to Weber that for any $\tau\in\mathcal{H},$ $\mathfrak{g}_3^{12}(\tau)$ is a zero of the polynomial $x^4+36x^3+270x^2+(756-j(\tau))x+3^6$ (see, for example, \cite[Thm.~5]{Uzunkol}). Consequently, we can write $j(\tau)$ as a rational function of $\mathfrak{g}_3(\tau)$, namely, 
\begin{equation}\label{E:j}
j(\tau)=\frac{\left(\mathfrak{g}_3^{12}(\tau)+3\right)^3\left(\mathfrak{g}_3^{12}(\tau)+27\right)}{\mathfrak{g}_3^{12}(\tau)}.
\end{equation}
Observe that we can also rewrite the function $s_3(q)$, defined in Proposition~\ref{P:Samart}, as
\begin{equation*}
s_3(q)=\left(\mathfrak{g}_3^{6}(\tau)+27\mathfrak{g}_3^{-6}(\tau)\right)^2.
\end{equation*}
Recall from Ramanujan's theory of elliptic functions of signature 3 that if $q_t(\alpha)$ is the \textit{elliptic nome}
\begin{equation}\label{E:ellnome}
q_t(\alpha)=\exp\left(-\frac{\pi}{\sin\left(\frac{\pi}{t}\right)}\frac{\F_t(1-\alpha)}{\F_t(\alpha)}\right),
\end{equation} 
then $s_3(q_3(\alpha))=\frac{27}{\alpha(1-\alpha)}$ for any $\alpha$ which makes both $\F_3(1-\alpha)$ and $\F_3(\alpha)$ convergent. Hence it follows that 
\begin{align*}
\left(\mathfrak{g}_3^{6}\left(-\frac{1}{\tau_2}\right)+27\mathfrak{g}_3^{-6}\left(-\frac{1}{\tau_2}\right)\right)^2&=s_3\left(q\left(-\frac{1}{\tau_2}\right)\right)\\
&=s_3\left(q_3\left(\frac{27}{k^3}\right)\right)\\
&=\frac{k^6}{k^3-27},
\end{align*}
which implies that $\mathfrak{g}_3^{12}\left(-\frac{1}{\tau_2}\right)$ can possibly be either $k^3-27$ or $729/(k^3-27)$. However, as a function of $k$, $\mathfrak{g}_3^{12}\left(-\frac{1}{\tau_2}\right)$ is decreasing on $(3,\infty)$, so it must equal $729/(k^3-27)$ on this interval. Therefore, we have by \eqref{E:j} that 
\begin{equation*}
j(\tau_2)=j\left(-\frac{1}{\tau_2}\right)=\left(\frac{k(k^3+216)}{k^3-27}\right)^3.
\end{equation*} 
On the other hand, it can be found using standard computer algebra systems such as \texttt{Maple} that if $k^3-27\neq 0$, then $j(E)$ coincides with $j(\tau_2)$ obtained above. 
\end{proof}

\begin{proof}[Proof of Theorem~\ref{T:Main}]
(i) Let $s\in\mathbb{R}\backslash [0,64],r=\sqrt{\frac{s}{s-64}},P=(-r,0),$ and $Q=(r,0).$ Then the equation representing $E_s$ can be rewritten as
\begin{equation}
E_s : y^2=(x-1)(x-r)(x+r).
\end{equation}
We will divide the proof of into two cases, whose arguments are somewhat parallel. 
\begin{case}\label{C:1}
$s>64$
\end{case}
In this case, we have $r>1.$ Then using Lemma~\ref{L:period}(i) it follows that $E_s\cong \mathbb{C}/(\mathbb{Z}+\mathbb{Z}\tau),$ where
\begin{equation*}
\tau=\frac{\F_2\left(\frac{r-1}{2r}\right)}{\F_2\left(\frac{r+1}{2r}\right)}i.
\end{equation*}
By a result from Ramanujan's theory of elliptic functions of signature 2, we have that $s_2(q_2(\alpha))=\frac{16}{\alpha(1-\alpha)},$ where $q_2(\alpha)$ is as defined in \eqref{E:ellnome}. As a consequence, we can easily deduce that
\begin{equation*}
s_2\left(q\left(-\frac{1}{2\tau}\right)\right)=s_2\left(\exp\left(-\pi\frac{\F_2\left(\frac{r+1}{2r}\right)}{\F_2\left(\frac{r-1}{2r}\right)}\right)\right)=s_2\left(q_2\left(\frac{r-1}{2r}\right)\right)=\frac{64r^2}{r^2-1}=s.
\end{equation*}
Since $\tau$ is purely imaginary and $0<\Im(\tau)<1$, it follows that $-\frac{1}{2\tau}$ is also purely imaginary, and 
\begin{equation*}
\left|-\frac{1}{2\tau}\right|=\Im\left(-\frac{1}{2\tau}\right)=\frac{1}{2\Im(\tau)}>\frac{1}{2}.
\end{equation*}
Let $y=\Im(\tau).$ Then, by applying Proposition~\ref{P:Samart}(i), one sees that
\begin{align*}
n_2\left(s_2\left(q\left(-\frac{1}{2\tau}\right)\right)\right)&=\frac{1}{y\pi^3}\sideset{}{'}\sum_{m,n\in \mathbb{Z}}\biggl(16\left(\frac{4m^2}{\left(\left(4n^2/y^2\right)+m^2\right)^3}-\frac{1}{\left(\left(4n^2/y^2\right)+m^2\right)^2}\right)\\
&\qquad\qquad\qquad -\left(\frac{4m^2}{\left(\left(n^2/4y^2\right)+m^2\right)^3}-\frac{1}{\left(\left(n^2/4y^2\right)+m^2\right)^2}\right)\biggr)\\
&=\frac{16y^3}{\pi^3}\sideset{}{'}\sum_{m,n\in \mathbb{Z}}\biggl(\left(\frac{4y^2m^2}{\left(4n^2+y^2m^2\right)^3}-\frac{1}{\left(4n^2+y^2m^2\right)^2}\right)\\
&\qquad\qquad\qquad -\left(\frac{16y^2m^2}{\left(n^2+4y^2m^2\right)^3}-\frac{1}{\left(n^2+4y^2m^2\right)^2}\right)\biggr)\\
&=\frac{16y^3}{\pi^3}\left[\vphantom{-\sideset{}{'}\sum_{\substack{m \text{ even}\\ n \in \mathbb{Z}}}\left(\frac{4y^2m^2}{\left(n^2+y^2m^2\right)^3}-\frac{1}{\left(n^2+y^2m^2\right)^2}\right)}\sideset{}{'}\sum_{\substack{m\in \mathbb{Z}\\n \text{ even}}}\left(\frac{4y^2m^2}{\left(n^2+y^2m^2\right)^3}-\frac{1}{\left(n^2+y^2m^2\right)^2}\right)\right.\\
&\qquad\qquad \left.-\sideset{}{'}\sum_{\substack{m \text{ even}\\n \in \mathbb{Z}}}\left(\frac{4y^2m^2}{\left(n^2+y^2m^2\right)^3}-\frac{1}{\left(n^2+y^2m^2\right)^2}\right)\right]\\
&=\frac{16y^3}{\pi^3}\left[\sum_{\substack{m \text{ even}\\n \text{ odd}}}\frac{n^2-3y^2m^2}{\left(n^2+y^2m^2\right)^3}-\sum_{\substack{m \text{ odd}\\n \text{ even}}}\frac{n^2-3y^2m^2}{\left(n^2+y^2m^2\right)^3}\right].
\end{align*}
On the other hand, recall that if $\tilde{E}$ is the elliptic curve
\begin{equation*}
Y^2=4X^3-g_2X-g_3=4(X-e_1)(X-e_2)(X-e_3),
\end{equation*}
where $e_1,e_2,e_3\in\mathbb{R}$ with $e_3<e_2<e_1$, and $\tilde{\omega_1}$ and $\tilde{\omega_2}$ are the real and complex periods of $\tilde{E}$, then
\begin{align*}
\left(\wp_\Lambda(\tilde{\omega_1}/2),\wp_\Lambda'(\tilde{\omega_1}/2)\right)&=(e_1,0),\\
\left(\wp_\Lambda((\tilde{\omega_1}+\tilde{\omega_2})/2),\wp_\Lambda'((\tilde{\omega_1}+\tilde{\omega_2})/2)\right)&=(e_2,0),\\
\left(\wp_\Lambda(\tilde{\omega_2}/2),\wp_\Lambda'(\tilde{\omega_2}/2)\right)&=(e_3,0),
\end{align*}
where $\Lambda=\mathbb{Z}\tilde{\omega_1}+\mathbb{Z}\tilde{\omega_2}.$ It can be checked in a straightforward manner that the birational map
\begin{equation*}
(x,y)\mapsto (36x-12,432y)
\end{equation*}
gives an isomorphism between $E_s(\mathbb{R})$ and 
\begin{align*}
\tilde{E_s}(\mathbb{R}) : y^2&=4x^3-(5184r^2+1728)x-(13824-124416r^2)\\
&=4(x-24)(x-(36r-12))(x-(-36r-12)).
\end{align*}
Since $-r<1<r$, one finds immediately that the isomorphism $E_s\cong \mathbb{C}/(\mathbb{Z}+\mathbb{Z}\tau)$ sends $P$ to $\tau/2$ and $Q$ to $1/2$. Let $\xi=(P)-(Q)$ and $q=e^{2\pi i \tau}$. Then it follows from Corollary~\ref{Co:2} that
\begin{align*}
\frac{8}{3\pi^2}\left(6\mathcal{L}^{E_s}_{3,1}-\mathcal{L}^{E_s}_{3,2}\right)(\xi)
&=\frac{8\Im(\tau)^3}{\pi^3}\sideset{}{'}\sum_{m,n\in\mathbb{Z}}\biggl(\frac{\left((-1)^m-(-1)^n\right)\left(2(m\Re(\tau)+n)^2-2m^2\Im(\tau)^2\right)}{|m\tau+n|^6}\\
&\qquad\qquad\qquad -\frac{\left((-1)^m-(-1)^n\right)}{|m\tau+n|^4}\biggr)\\
&=\frac{16y^3}{\pi^3}\left[\vphantom{-\sum_{\substack{m \text{ odd}\\ n \text{ even}}}\left(\frac{2(n^2-y^2m^2)}{(n^2+y^2m^2)^3}-\frac{1}{(n^2+y^2m^2)^2}\right)}\sum_{\substack{m \text{ even}\\ n \text{ odd}}}\left(\frac{2(n^2-y^2m^2)}{(n^2+y^2m^2)^3}-\frac{1}{(n^2+y^2m^2)^2}\right)\right.\\
&\qquad\qquad\qquad \left.-\sum_{\substack{m \text{ odd}\\ n \text{ even}}}\left(\frac{2(n^2-y^2m^2)}{(n^2+y^2m^2)^3}-\frac{1}{(n^2+y^2m^2)^2}\right)\right]\\
&=\frac{16y^3}{\pi^3}\left[\sum_{\substack{m \text{ even}\\ n \text{ odd}}}\frac{n^2-3y^2m^2}{\left(n^2+y^2m^2\right)^3}-\sum_{\substack{m \text{ odd}\\ n \text{ even}}}\frac{n^2-3y^2m^2}{\left(n^2+y^2m^2\right)^3}\right],
\end{align*}
where we use the fact that $\Re\left((m\tau+n)^2\right)=(m\Re(\tau)+n)^2-m^2\Im(\tau)^2$ in the second equality. Therefore, the first case of the theorem is proved.

\begin{case}
$s<0$
\end{case}
In this case, we have $0<r<1.$ Then by Lemma~\ref{L:period}(i) one finds that $E_s\cong \mathbb{C}/(\mathbb{Z}+\mathbb{Z}\tau)$ where
\begin{equation*}
\tau=\frac{\F_2\left(\frac{1-r}{1+r}\right)}{\F_2\left(\frac{2r}{1+r}\right)}i.
\end{equation*}
Thus $\tau$ is again purely imaginary and $\Im(\tau)\in(0,\infty).$ Let $\alpha=\frac{1+r}{2r}.$ Then $\alpha >1,$ and
\begin{equation}\label{E:tau2}
\tau=\frac{\F_2\left(\frac{\alpha-1}{\alpha}\right)}{\F_2\left(\frac{1}{\alpha}\right)}i.
\end{equation}
Now we apply the hypergeometric transformations \cite[Thm.~2.2.5]{AAR} and \cite{Mathematica} to deduce the following identities:
\begin{align*}
\F_2\left(\frac{\alpha-1}{\alpha}\right)&=\alpha^{\frac{1}{2}}\F_2(1-\alpha)\\
\F_2\left(\frac{1}{\alpha}\right)&=\alpha^{\frac{1}{2}}\left(\F_2(\alpha)+\F_2(1-\alpha)i\right).
\end{align*}
Plugging the expressions above into \eqref{E:tau2}, one has immediately that
\begin{equation*}
\frac{\tau-1}{2\tau}=\frac{\F_2(\alpha)}{2\F_2(1-\alpha)}i.
\end{equation*}
By the same argument in Case~\ref{C:1}, we then obtain 
\begin{equation*}
s_2\left(q\left(\frac{\tau-1}{2\tau}\right)\right)=s_2\left(\exp\left(-\pi\frac{\F_2(\alpha)}{\F_2(1-\alpha)}\right)\right)\\
=\frac{16}{\alpha(1-\alpha)}\\
=\frac{64r^2}{r^2-1}=s.
\end{equation*}
Again, we let $y=\Im(\tau).$ It is easily seen that 
\begin{equation*}
\frac{\tau-1}{2\tau}=\frac{1}{2}+\frac{1}{2y}i.
\end{equation*}
Hence it follows by Proposition~\ref{P:Samart}(i) that
\begin{align*}
n_2(s)&=n_2\left(s_2\left(q\left(\frac{\tau-1}{2\tau}\right)\right)\right)\\
&=\frac{1}{y\pi^3}\sideset{}{'}\sum_{m,n\in \mathbb{Z}}\biggl(16\left(\frac{4(2n+m)^2}{\left(4n^2\left(1+1/y^2\right)+4mn+m^2\right)^3}-\frac{1}{\left(4n^2\left(1+1/y^2\right)+4mn+m^2\right)^2}\right)\\
&\qquad\qquad\qquad -16\left(\frac{4(n+2m)^2}{\left(n^2\left(1+1/y^2\right)+4mn+4m^2\right)^3}-\frac{1}{\left(n^2\left(1+1/y^2\right)+4mn+4m^2\right)^2}\right)\biggr)\\
&=\frac{16y^3}{\pi^3}\sideset{}{'}\sum_{m,n\in \mathbb{Z}}\biggl(\left(\frac{4y^2(2n+m)^2}{\left(y^2(2n+m)^2+(2n)^2\right)^3}-\frac{1}{\left(y^2(2n+m)^2+(2n)^2\right)^2}\right)\\
&\qquad\qquad\qquad -\left(\frac{4y^2(n+2m)^2}{\left(y^2(n+2m)^2+n^2\right)^3}-\frac{1}{\left(y^2(n+2m)^2+n^2\right)^2}\right)\biggr)\\
&=\frac{16y^3}{\pi^3}\left[\vphantom{-\sum_{\substack{m \text{ even}\\ n\text{ odd}}}\left(\frac{4y^2(m+n)^2}{\left(n^2+y^2(m+n)^2\right)^3}-\frac{1}{\left(n^2+y^2(m+n)^2\right)^2}\right)}\sum_{\substack{m \text{ odd}\\ n\text{ even}}}\left(\frac{4y^2(m+n)^2}{\left(n^2+y^2(m+n)^2\right)^3}-\frac{1}{\left(n^2+y^2(m+n)^2\right)^2}\right)\right.\\
&\qquad\qquad \left.-\sum_{\substack{m \text{ even}\\ n\text{ odd}}}\left(\frac{4y^2(m+n)^2}{\left(n^2+y^2(m+n)^2\right)^3}-\frac{1}{\left(n^2+y^2(m+n)^2\right)^2}\right)\right]\\
&=\frac{16y^3}{\pi^3}\left[\vphantom{-\sum_{\substack{m \text{ odd}\\ n\text{ odd}}}\left(\frac{4y^2m^2}{\left(n^2+y^2m^2\right)^3}-\frac{1}{\left(n^2+y^2m^2\right)^2}\right)}\sum_{\substack{m \text{ odd}\\ n\text{ even}}}\left(\frac{4y^2m^2}{\left(n^2+y^2m^2\right)^3}-\frac{1}{\left(n^2+y^2m^2\right)^2}\right)\right.\\
&\qquad\qquad \left.-\sum_{\substack{m \text{ odd}\\ n\text{ odd}}}\left(\frac{4y^2m^2}{\left(n^2+y^2m^2\right)^3}-\frac{1}{\left(n^2+y^2m^2\right)^2}\right)\right]\\
&=\frac{16y^3}{\pi^3}\left[\sum_{\substack{m \text{ odd}\\ n \text{ odd}}}\frac{n^2-3y^2m^2}{\left(n^2+y^2m^2\right)^3}-\sum_{\substack{m \text{ odd}\\ n \text{ even}}}\frac{n^2-3y^2m^2}{\left(n^2+y^2m^2\right)^3}\right].
\end{align*}
To evaluate $\mathcal{L}^{E_s}_{3,1}$ and $\mathcal{L}^{E_s}_{3,2}$ at $\xi:=(P)-(Q)$, we first use the fact that $-r<r<1$ and the argument in Case~\ref{C:1} to find that $P$ and $Q$ are mapped to $\tau/2$ and $(1+\tau)/2$, respectively, via the isomorphism $E_s\cong\mathbb{C}/(\mathbb{Z}+\mathbb{Z}\tau).$ This therefore yields
\begin{align*}
\frac{8}{3\pi^2}\left(6\mathcal{L}^{E_s}_{3,1}-\mathcal{L}^{E_s}_{3,2}\right)(\xi)
&=\frac{8\Im(\tau)^3}{\pi^3}\sideset{}{'}\sum_{m,n\in\mathbb{Z}}\biggl(\frac{\left((-1)^{n-m}-(-1)^n\right)\left(2(m\Re(\tau)+n)^2-2m^2\Im(\tau)^2\right)}{|m\tau+n|^6}\\
&\qquad\qquad\qquad -\frac{\left((-1)^{n-m}-(-1)^n\right)}{|m\tau+n|^4}\biggr)\\
&=\frac{8y^3}{\pi^3}\sideset{}{'}\sum_{m,n\in\mathbb{Z}}(-1)^n\left((-1)^m-1\right)\biggl(\frac{\left(2n^2-2y^2m^2\right)}{|m\tau+n|^6}-\frac{1}{|m\tau+n|^4}\biggr)\\
&=\frac{16y^3}{\pi^3}\left[\vphantom{-\sum_{\substack{m \text{ odd}\\ n \text{ even}}}\left(\frac{2(n^2-y^2m^2)}{(n^2+y^2m^2)^3}-\frac{1}{(n^2+y^2m^2)^2}\right)}\sum_{\substack{m \text{ odd}\\ n \text{ odd}}}\left(\frac{2(n^2-y^2m^2)}{(n^2+y^2m^2)^3}-\frac{1}{(n^2+y^2m^2)^2}\right)\right.\\
&\qquad\qquad\qquad \left.-\sum_{\substack{m \text{ odd}\\ n \text{ even}}}\left(\frac{2(n^2-y^2m^2)}{(n^2+y^2m^2)^3}-\frac{1}{(n^2+y^2m^2)^2}\right)
\right]\\
&=\frac{16y^3}{\pi^3}\left[\sum_{\substack{m \text{ odd}\\ n \text{ odd}}}\frac{n^2-3y^2m^2}{\left(n^2+y^2m^2\right)^3}-\sum_{\substack{m \text{ odd}\\ n \text{ even}}}\frac{n^2-3y^2m^2}{\left(n^2+y^2m^2\right)^3}\right]=n_2(s),
\end{align*}
as desired.\\

(ii) Let $s\in[108,\infty).$ By Lemma~\ref{L:period}(ii), one has $F_s\cong \mathbb{C}/(\mathbb{Z}+\mathbb{Z}\tau),$ where $$\tau=\sqrt{3}\frac{\F_3\left(\frac{27}{r^3}\right)}{\F_3\left(1-\frac{27}{r^3}\right)}i.$$ Since $s=r^6/(r^3-27),$ we see immediately from the proof of Lemma~\ref{L:period}(ii) that $s_3(q(-1/\tau))=s.$ Also, letting $y=\Im(\tau),$ we have $\Im(-1/\tau)=1/y\geq 1/\sqrt{3}.$ Hence we can apply Proposition~\ref{P:Samart}(ii) to show that
\begin{equation*}
n_3(s)=n_3\left(s_3\left(q\left(-\frac{1}{\tau}\right)\right)\right)=\frac{15y^3}{4\pi^3}\left(\sideset{}{'}\sum_{\substack{m,n \in \mathbb{Z}\\ 3\nmid n}}\frac{n^2-3y^2m^2}{(n^2+y^2m^2)^3}-8\sideset{}{'}\sum_{\substack{m,n \in \mathbb{Z}\\ 3|n}}\frac{n^2-3y^2m^2}{(n^2+y^2m^2)^3}\right).
\end{equation*}
Let $RHS$ denote the right-hand side of \eqref{E:Main2}. Then it can be seen using Corollary~\ref{Co:2} that if $\omega=e^{2\pi i/3},$ then
\begin{align*}
RHS=\frac{3y^3}{4\pi^3}&\sideset{}{'}\sum_{m,n\in\mathbb{Z}}\biggl(\frac{3\omega^n-27\omega^{-m}-54\omega^{n-m}-2}{|m\tau+n|^4}\\
&\qquad+4\Re\left(\frac{\left(9\omega^{-m}-6\omega^{n}+18\omega^{n-m}-1\right)(m\tau+n)^2}{|m\tau+n|^6}\right)\biggr).
\end{align*}
It then can be shown that $RHS=n_3(s)$ by using \eqref{E:omega} and properly rearranging the terms inside the summation above.\\

(iii) Similar to Theorem~\ref{T:Main}(i), the proof can be divided into two cases, depending on the value of $s$. Assume first that $s\geq 256.$ Then $r'\in [1/\sqrt{2},1).$ Hence by Lemma~\ref{L:period}(i) we have that
$G_s\cong \mathbb{C}/(\mathbb{Z}+\mathbb{Z}\tau),$ where
\begin{equation*}
\tau=\frac{\F_2\left(\frac{1-r'}{1+r'}\right)}{\F_2\left(\frac{2r'}{1+r'}\right)}i,
\end{equation*} 
so that $\Im(-1/2\tau) \geq 1/\sqrt{2}.$
Then using \cite[Thm.~9.1,Thm.~9.2]{BBG} and the fact that $s_4(q_4(\alpha))=\frac{64}{\alpha(1-\alpha)}$ we can deduce that \begin{align*}
s_4\left(q\left(-\frac{1}{2\tau}\right)\right)&=s_4\left(\exp\left(-\pi\frac{\F_2\left(\frac{2r'}{1+r'}\right)}{\F_2\left(\frac{1-r'}{1+r'}\right)}\right)\right)\\
&=s_4\left(\exp\left(-\sqrt{2}\pi\frac{\F_4(r'^2)}{\F_4(1-r'^2)}\right)\right)\\
&=s_4\left(q_4(1-r'^2)\right)=s.
\end{align*}
By elementary but tedious calculations analogous to those in the proof of Theorem~\ref{T:Main}(i), if $y=\Im(\tau),$ then
\begin{align*}
n_4\left(s_4\left(q\left(-\frac{1}{2\tau}\right)\right)\right)&=\frac{20y^3}{\pi^3}\left(4\sideset{}{'}\sum_{\substack{m,n \in \mathbb{Z}\\ m \text{ even}}}\frac{n^2-3y^2m^2}{(n^2+y^2m^2)^3}-\sideset{}{'}\sum_{m,n\in \mathbb{Z}}\frac{n^2-3y^2m^2}{(n^2+y^2m^2)^3}\right)\\
&=\frac{16}{9\pi^2}\big(15\mathcal{L}^{G_s}_{3,1}(2(P)-(2Q)+2(P+2Q))\\
&\qquad +\mathcal{L}^{G_s}_{3,2}(4(Q)-5(P)+2(2Q)+4(P+Q)-5(P+2Q))\big).
\end{align*}
Next, if $s<0$, then $r'>1,$ so the normalized period lattice of $G_s$ is generated by $1$ and $$\tau:=\frac{\F_2\left(\frac{r'-1}{2r'}\right)}{\F_2\left(\frac{r'+1}{2r'}\right)}i.$$
We employ the hypergeometric transformations \cite[Thm.~2.2.5]{AAR} and \cite{Mathematica} one more time to deduce that
\begin{equation*}
\frac{\tau-1}{2\tau}=\frac{\F_2\left(\frac{2r'}{r'+1}\right)}{\F_2\left(\frac{1-r'}{r'+1}\right)}i.
\end{equation*}
Hence, in this case, 
\begin{equation*}
s_4\left(q\left(\frac{\tau-1}{2\tau}\right)\right)=s,
\end{equation*}
by the same argument used for the case $s\geq 256.$ If $y=\Im(\tau),$ then $\Re((\tau-1)/2\tau)=1/2$ and $\Im((\tau-1)/2\tau)=1/2y >1/2.$ Finally, it remains to show that 
\begin{align*}
n_4\left(s_4\left(q\left(\frac{\tau-1}{2\tau}\right)\right)\right)&=\frac{20y^3}{\pi^3}\left(4\sideset{}{'}\sum_{\substack{m \text{ odd}\\ n \text{ even}}}\frac{n^2-3y^2m^2}{(n^2+y^2m^2)^3}-\frac{3}{4}\sideset{}{'}\sum_{m,n\in \mathbb{Z}}\frac{n^2-3y^2m^2}{(n^2+y^2m^2)^3}\right)\\ &=\frac{8}{9\pi^2}\big(30\mathcal{L}^{G_s}_{3,1}(2(2Q)-(P+2Q)+2(P))\\
&\qquad +\mathcal{L}^{G_s}_{3,2}(5(P+2Q)+8(Q)+8(P+Q)-11(2Q)-10(P))\big),
\end{align*}
which, again, requires only Proposition~\ref{P:Samart}(iii), Corollary~\ref{Co:2}, and some laborious work.
\end{proof}

It is also interesting to consider $n_j(s)$ for the real values of $s$ omitted from the results in Theorem~\ref{T:Main}. For instance, if $s\in\{0,64\}$, then $E_s$ is singular; i.e., it is no longer an elliptic curve, so $\mathcal{L}^{E_s}_{3,j},j=1,2$ are not defined. It is not difficult to see by direct calculation that $n_2(0)=0$. Also, the author has shown in \cite[Thm.~1.2]{SamartII} that $n_2(64)$ has a simple expression in terms of a modular $L$-value, namely $n_2(64)=8L'(g_{16},0)$, where $g_{16}(\tau)=\eta^6(4\tau).$ If $s\in (0,64)$; i.e., $(x-1)\left(x^2-\frac{s}{s-64}\right)$ has only one real root, the story turns out to be quite different. Indeed, we will see that Formula~\eqref{E:Main} is not true in this case by the following observation:
\begin{proposition}\label{P:Pete}
Let $s\in (0,64)$ and let $E_s,r,P,$ and $Q$ be as defined in Theorem~\ref{T:Main}(i). Then 
\begin{equation*}
\mathcal{L}^{E_s}_{3,j}(P)=\mathcal{L}^{E_s}_{3,j}(Q),\qquad j=1,2.
\end{equation*}
\end{proposition}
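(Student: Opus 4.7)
The key observation is that for $s \in (0,64)$ the ratio $s/(s-64)$ is negative, so $r = \sqrt{s/(s-64)} = i\rho$ with $\rho > 0$, and $E_s : y^2 = (x-1)(x^2+\rho^2)$ has only one real $2$-torsion point $(1,0)$. The points $P = (-i\rho,0)$ and $Q = (i\rho,0)$ are complex conjugates of each other in $E_s(\mathbb{C})$, so the proposition will reduce to a complex-conjugation symmetry of the elliptic trilogarithm and its companion.

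The first step is to pin down the uniformization $E_s \cong \mathbb{C}/(\mathbb{Z}+\mathbb{Z}\tau)$. In contrast to the cases handled inside the proof of Theorem~\ref{T:Main}(i), the cubic now has complex conjugate roots; this is the classical situation in which $\tau$ normalizes to $\tau = \tfrac12 + iy$ for some $y > 0$, equivalently $E_s(\mathbb{R})$ has a single connected component. (One can either appeal to this classical fact or derive it by running Lemma~\ref{L:period}(i) formally with $a=1$, $b=i\rho$, $c=-i\rho$ and applying the same hypergeometric transformations used in Case~2 of Theorem~\ref{T:Main}(i).) In particular $q = e^{2\pi i\tau} = -e^{-2\pi y} \in \mathbb{R}$. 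The non-real $2$-torsion points on the torus are $\tau/2$ and $(1+\tau)/2$, and since
$$\overline{\tau/2} = \tfrac14 - \tfrac{iy}{2} \equiv \tfrac34 + \tfrac{iy}{2} = (1+\tau)/2 \pmod{\mathbb{Z}+\mathbb{Z}\tau},$$
the anti-holomorphic involution $u \mapsto \bar u$ on the torus — which is precisely complex conjugation on $E_s$ — swaps them. Hence $\{P,Q\}$ corresponds to $\{[\tau/2],\,[(1+\tau)/2]\}$ and their multiplicative images satisfy $x_Q = \overline{x_P}$ (explicitly $x_P = ie^{-\pi y}$, $x_Q = -x_P$).

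Second, I would verify that each summand in the defining series is invariant under conjugation of its argument. Since $\Li_m(\bar z) = \overline{\Li_m(z)}$ on the principal branch and $\log|\bar z| = \log|z|$, the single-valued function $\mathcal{L}_3$ satisfies $\mathcal{L}_3(\bar z) = \mathcal{L}_3(z)$ on $|z|\le 1$; the functional equation $\mathcal{L}_3(z^{-1}) = \mathcal{L}_3(z)$ then extends this to all of $\mathbb{P}^1(\mathbb{C})$. Likewise $J_3(z) = \log^2|z|\log|1-z|$ is obviously invariant under $z \mapsto \bar z$, and the Bernoulli correction term in $\mathcal{L}^{E}_{3,2}$ depends only on $|x|$ and $|q|$. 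Combining these invariances with $q \in \mathbb{R}$,
$$\mathcal{L}^{E_s}_{3,1}(Q) = \sum_{n\in\mathbb{Z}} \mathcal{L}_3(q^n \overline{x_P}) = \sum_{n\in\mathbb{Z}} \mathcal{L}_3(\overline{q^n x_P}) = \sum_{n\in\mathbb{Z}} \mathcal{L}_3(q^n x_P) = \mathcal{L}^{E_s}_{3,1}(P),$$
and the analogous three-piece rearrangement gives $\mathcal{L}^{E_s}_{3,2}(Q) = \mathcal{L}^{E_s}_{3,2}(P)$.

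The main obstacle is the first step: rigorously extracting $\Re(\tau) = 1/2$ (equivalently $q \in \mathbb{R}$) from the hypergeometric period formulas in the complex-conjugate-root regime, since Lemma~\ref{L:period}(i) as stated assumes three real roots. Once that is in hand, everything else is a short symmetry manipulation.
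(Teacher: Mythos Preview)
Your argument is correct and considerably more conceptual than the paper's. Both proofs begin the same way: for $s\in(0,64)$ the cubic has one real root and a complex-conjugate pair, so the normalized period ratio satisfies $\Re(\tau)=\tfrac12$. The paper extracts this from Cohen's AGM algorithm for the complex-root case, obtaining
\[
\tau=\frac12+\frac{\AGM\bigl(2\sqrt[4]{1-r^{2}},\sqrt{2(\sqrt{1-r^{2}}+1)}\bigr)}{2\,\AGM\bigl(2\sqrt[4]{1-r^{2}},\sqrt{2(\sqrt{1-r^{2}}-1)}\bigr)}\,i,
\]
which resolves the obstacle you flagged. From here the paths diverge. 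The paper identifies $P,Q$ with $(1+\tau)/2,\tau/2$, plugs into the Eisenstein--Kronecker expressions of Corollary~\ref{Co:2}, and reduces $\mathcal{L}^{E_s}_{3,j}((Q)-(P))$ to the two lattice-sum identities of the preceding Lemma, which are then proved by substitution tricks exploiting $\Re(\tau)=\tfrac12$. Your route instead observes that $q=e^{2\pi i\tau}\in\mathbb{R}$, that $x_{Q}=\overline{x_{P}}$, and that each summand $\mathcal{L}_3$ and $J_3$ (and the Bernoulli correction) is invariant under $z\mapsto\bar z$; the equality then follows term by term with no auxiliary lemma. Your approach is shorter, explains \emph{why} the identity holds (it is just complex conjugation on a curve defined over $\mathbb{R}$), and would apply verbatim to any real single-valued $q$-average such as $D^{E}$ or $J^{E}$. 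The paper's computation, on the other hand, stays entirely inside the Eisenstein--Kronecker framework that drives the rest of Section~\ref{Sec:MM}, and its Lemma records two lattice identities that may have independent use.
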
 
Our proof of this proposition relies on the following facts:
\begin{lemma}
Let $\tau=1/2+yi,$ where $y\in\mathbb{R}.$ Then the following identities hold:
\begin{align}
&\sum_{\substack{m \text{ odd}\\ n\in\mathbb{Z}}}\frac{(-1)^n m^2}{|m\tau+n|^6}=0, \label{L:claim1}\\
&\sum_{\substack{m \text{ odd}\\ n\in\mathbb{Z}}}\frac{(-1)^n (m/2+n)^2}{|m\tau+n|^6}=0. \label{L:claim2}
\end{align}
\end{lemma}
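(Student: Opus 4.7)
The plan is to exhibit a fixed-point-free involution on the index set that preserves the moduli $|m\tau+n|^2$, $m^2$, and $(m/2+n)^2$ appearing in the summands, but flips the sign of the character $(-1)^n$. Since $\tau=1/2+yi$, a direct computation gives
\begin{equation*}
m\tau+n=(m/2+n)+myi,\qquad |m\tau+n|^2=(m/2+n)^2+m^2y^2,
\end{equation*}
so the quantities $|m\tau+n|^2$, $m^2$, and $(m/2+n)^2$ depend on $n$ only through $(m/2+n)^2$.

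The involution I would use is
\begin{equation*}
\iota\colon(m,n)\longmapsto(m,-m-n).
\end{equation*}
Under $\iota$ the quantity $m/2+n$ becomes $-(m/2+n)$, so all three of $|m\tau+n|^2$, $m^2$, and $(m/2+n)^2$ are preserved. Moreover $(-1)^{-m-n}=(-1)^m(-1)^n=-(-1)^n$ because $m$ is odd. Finally $\iota$ has no fixed points on $\{(m,n):m\text{ odd},\,n\in\mathbb{Z}\}$, since $n=-m-n$ forces $m=-2n$, incompatible with $m$ being odd.

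Consequently, pairing each term of either series with its $\iota$-image produces two contributions that cancel, whence the sums in \eqref{L:claim1} and \eqref{L:claim2} vanish. The only point that needs care is absolute convergence of the series so that this reordering is legitimate: since $|m\tau+n|^{-6}$ decays like $(m^2+n^2)^{-3}$ on the set where $m$ is odd, the sums converge absolutely, and rearrangement via the involution $\iota$ is justified. I do not anticipate a substantive obstacle; the whole argument rests on spotting the symmetry $n\mapsto -m-n$ and noting that it twists $(-1)^n$ by the sign $(-1)^m=-1$ precisely because the outer summation is restricted to odd $m$.
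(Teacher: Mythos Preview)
Your proof is correct. The involution $(m,n)\mapsto(m,-m-n)$ does exactly what you claim: it fixes $m^2$, $(m/2+n)^2$, and $|m\tau+n|^2=(m/2+n)^2+m^2y^2$, flips $(-1)^n$ because $m$ is odd, and has no fixed points on the index set. Absolute convergence is clear, so the pairing argument is legitimate.

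Compared with the paper's proof, your treatment of \eqref{L:claim1} is essentially the same idea: the paper performs the substitution $n\mapsto -n-(2m+1)$ after writing the odd variable as $2m+1$, which is exactly your involution written out in coordinates. For \eqref{L:claim2}, however, the paper takes a longer route: it expands $(m/2+n)^2=m^2/4+n(m+n)$, kills the $m^2$ piece using \eqref{L:claim1}, and then reduces the remaining $n(m+n)$ sum by a further change of variables and a separate parity argument. Your observation that $(m/2+n)^2$ is already invariant under the same involution dispatches \eqref{L:claim2} in one stroke, so your argument is genuinely shorter and more uniform. The paper's approach, on the other hand, isolates the cross term $n(m+n)$ explicitly, which is not needed here but could be useful if one wanted finer information about the individual pieces.
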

\begin{proof}
Using the fact that $|z|=|-z|=|\bar{z}|$ for any $z\in\mathbb{C}$ and simple substitution, we find that
\begin{align*}
\sum_{\substack{m \text{ odd}\\ n\in\mathbb{Z}}}\frac{(-1)^n m^2}{|m\tau+n|^6}&=\sum_{m,n\in\mathbb{Z}}\frac{(-1)^n (2m+1)^2}{|(2m+1)(-1/2-yi)-n|^6}\\
&=\sum_{m,n\in\mathbb{Z}}\frac{(-1)^n (2m+1)^2}{|(2m+1)(1/2-yi)-(n+2m+1)|^6}\\
&=\sum_{m,n\in\mathbb{Z}}\frac{(-1)^{-n-1} (2m+1)^2}{|(2m+1)(1/2-yi)+n|^6}\\
&=-\sum_{\substack{m \text{ odd}\\ n\in\mathbb{Z}}}\frac{(-1)^n m^2}{|m\tau+n|^6}.
\end{align*}
Hence \eqref{L:claim1} follows. Then we apply \eqref{L:claim1} to show that
\begin{align*}
\sum_{\substack{m \text{ odd}\\ n\in\mathbb{Z}}}\frac{(-1)^n (m/2+n)^2}{|m\tau+n|^6}&=\sum_{\substack{m \text{ odd}\\ n\in\mathbb{Z}}}\left(\frac{(-1)^n m^2}{4|m\tau+n|^6}+\frac{(-1)^n n(m+n)}{|m\tau+n|^6}\right)\\
&=\sum_{\substack{m \text{ odd}\\ n \text{ even}}}\frac{nm}{|(m-n)\tau+n|^6}-\sum_{\substack{m \text{ even}\\ n \text{ odd}}}\frac{nm}{|(m-n)\tau+n|^6}.
\end{align*}
Next, we use similar tricks from the proof of \eqref{L:claim1} to argue that
\begin{align*}
\sum_{\substack{m \text{ even}\\ n \text{ odd}}}\frac{nm}{|(m-n)\tau+n|^6}&=\sum_{\substack{m \text{ even}\\ n \text{ odd}}}\frac{nm}{|(m-n)(-1/2-yi)-n|^6}\\
&=\sum_{\substack{m \text{ even}\\ n \text{ odd}}}\frac{nm}{|(m-n)(1/2-yi)-m|^6}\\
&=\sum_{\substack{m \text{ even}\\ n \text{ odd}}}\frac{nm}{|(n-m)(1/2-yi)+m|^6}\\
&=\sum_{\substack{m \text{ odd}\\ n \text{ even}}}\frac{nm}{|(m-n)\tau+n|^6}.
\end{align*}  
Thus we have \eqref{L:claim2}.
\end{proof}

\begin{proof}[Proof of Proposition~\ref{P:Pete}]
Applying the transformation $y\mapsto y/2$, we instead consider the family
\begin{equation*}
\tilde{E_s} : y^2=4(x-1)(x-r)(x+r)=4x^3-4x^2-4r^2x+4r^2.
\end{equation*}
Then we again find the period lattice of $\tilde{E_s}$ using \cite[Algorithm~7.4.7]{Cohen}. Indeed, one has immediately that 
\begin{equation*}
E_s\cong \tilde{E_s}\cong \mathbb{C}/(\mathbb{Z}+\mathbb{Z}\tau),
\end{equation*}
where 
\begin{equation*}
\tau=\frac{1}{2}+\frac{\AGM\left(2\sqrt[4]{1-r^2},\sqrt{2\left(\sqrt{1-r^2}+1\right)}\right)}{2\AGM\left(2\sqrt[4]{1-r^2},\sqrt{2\left(\sqrt{1-r^2}-1\right)}\right)}i.
\end{equation*}
Note that the arguments in the $\AGM$ above are all positive, since $r^2=\frac{s}{s-64}<0.$ By similar analysis in the proof of Theorem~\ref{T:Main}(i), it can be shown that $P$ and $Q$ are mapped to $(1+\tau)/2$ and $\tau/2$ via the isomorphism above. Now by \eqref{E:L31} one sees that
\begin{align*}
\mathcal{L}^{E_s}_{3,1}(P)&=\frac{2y^3}{3\pi}\sideset{}{'}\sum_{m,n\in\mathbb{Z}}\left(\frac{(-1)^{n-m}}{|m\tau+n|^4}-(-1)^{n-m}\frac{(m/2+n)^2-m^2y^2}{|m\tau+n|^6}\right),\\
\mathcal{L}^{E_s}_{3,1}(Q)&=\frac{2y^3}{3\pi}\sideset{}{'}\sum_{m,n\in\mathbb{Z}}\left(\frac{(-1)^{n}}{|m\tau+n|^4}-(-1)^{n}\frac{(m/2+n)^2-m^2y^2}{|m\tau+n|^6}\right).
\end{align*}
Therefore, by \eqref{L:claim1}
\begin{align*}
\mathcal{L}^{E_s}_{3,1}((Q)-(P))&=\frac{2y^3}{3\pi}\sum_{\substack{m \text{ odd}\\n\in\mathbb{Z}}}\left(\frac{(-1)^{n}}{|m\tau+n|^4}-(-1)^{n}\frac{(m/2+n)^2-m^2y^2}{|m\tau+n|^6}\right)\\
&=\frac{4y^3}{3\pi}\sum_{\substack{m \text{ odd}\\n\in\mathbb{Z}}}\frac{(-1)^n m^2 y^2}{|m\tau+n|^6}=0.
\end{align*}
Similarly, by \eqref{E:L32}, 
\begin{align*}
\mathcal{L}^{E_s}_{3,2}(P)&=\frac{y^3}{\pi}\sideset{}{'}\sum_{m,n\in\mathbb{Z}}\left(\frac{(-1)^{n-m}}{|m\tau+n|^4}+2(-1)^{n-m}\frac{(m/2+n)^2-m^2y^2}{|m\tau+n|^6}\right),\\
\mathcal{L}^{E_s}_{3,2}(Q)&=\frac{y^3}{\pi}\sideset{}{'}\sum_{m,n\in\mathbb{Z}}\left(\frac{(-1)^{n}}{|m\tau+n|^4}+2(-1)^{n}\frac{(m/2+n)^2-m^2y^2}{|m\tau+n|^6}\right);
\end{align*}
whence 
\begin{align*}
\mathcal{L}^{E_s}_{3,2}((Q)-(P))&=\frac{y^3}{\pi}\sum_{\substack{m \text{ odd}\\n\in\mathbb{Z}}}\left(\frac{(-1)^{n}}{|m\tau+n|^4}+2(-1)^{n}\frac{(m/2+n)^2-m^2y^2}{|m\tau+n|^6}\right)\\
&=\frac{y^3}{\pi}\sum_{\substack{m \text{ odd}\\n\in\mathbb{Z}}}(-1)^{n}\frac{3(m/2+n)^2-m^2y^2}{|m\tau+n|^6}=0,
\end{align*}
where the last equality follows from \eqref{L:claim1} and \eqref{L:claim2}.
\end{proof}

Although no general formula for $n_2(s)$ where $s\in (0,64)$ has been found, we were able to verify the following formulas numerically in \texttt{PARI} and \texttt{Maple}: 
\begin{align*}
n_2(1)&\stackrel{?}=-\frac{12}{7\pi^2}\mathcal{L}^{E_1}_{3,1}(4(R)-(O)),\\
n_2(16)&\stackrel{?}=-\frac{12}{\pi^2}\mathcal{L}^{E_{16}}_{3,1}(4(R)+(O)),
\end{align*}
where $R=(1,0)$ and $\stackrel{?}=$ means that they are equal to at least 75 decimal places. (Note that $E_1$ and $E_{16}$ are both CM elliptic curves.)

\section{Connection with special values of $L$-functions}\label{Sec:Lvalues}
In this section, we investigate relationships between evaluations of $\mathcal{L}^{E}_{3,j}, j=1,2$ and some special values of $L$-functions, the first evidence of which is the symmetric square $L$-function of $E$. It was verified numerically in \cite[\S 3]{MS} that for some non-CM elliptic curves $E\cong \mathbb{C}/(\mathbb{Z}+\mathbb{Z}\tau)$ there exist degree zero divisors $\xi_1$ and $\xi_2$ on $E$ such that 
\begin{equation}\label{E:MS}
\begin{vmatrix}
\Re\left(K_{1,3}(\tau;\xi_1)\right) & K_{2,2}(\tau;\xi_1) \\ 
\Re\left(K_{1,3}(\tau;\xi_2)\right) & K_{2,2}(\tau;\xi_2)
\end{vmatrix}\stackrel{?}\sim_{\mathbb{Q}^\times}\frac{\pi^6}{\Im(\tau)^4}L''(\Sym^2E,0).
\end{equation}
The above relation can be rephrased in terms of the determinant of $\mathcal{L}^{E}_{3,j}, j=1,2$ using the result below.
\begin{proposition}
Let $E$ be an elliptic curve over $\mathbb{C}$ and suppose that $E\cong \mathbb{C}/(\mathbb{Z}+\mathbb{Z}\tau)$. If $\xi_1$ and $\xi_2$ are divisors of degree zero on $E$, then 
\begin{equation*}
\begin{vmatrix}
\mathcal{L}^E_{3,1}(\xi_1) & \mathcal{L}^E_{3,2}(\xi_1) \\ 
\mathcal{L}^E_{3,1}(\xi_2) & \mathcal{L}^E_{3,2}(\xi_2)
\end{vmatrix}=-\frac{2\Im(\tau)^6}{\pi^2}\begin{vmatrix}
\Re\left(K_{1,3}(\tau;\xi_1)\right) & K_{2,2}(\tau;\xi_1) \\ 
\Re\left(K_{1,3}(\tau;\xi_2)\right) & K_{2,2}(\tau;\xi_2)
\end{vmatrix},
\end{equation*}
where $K_{a,b}(\tau;\xi)=\sum_{P\in E}n_P K_{a,b}(\tau;u_P)$ if $\xi=\sum_{P\in E}n_P (P)$ and $u_P$ is the image of $P$ in $\mathbb{C}/(\mathbb{Z}+\mathbb{Z}\tau)$.
\end{proposition}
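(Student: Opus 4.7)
The plan is to reduce everything to the identities \eqref{E:EK2} and \eqref{E:EK3} proved earlier and then do a short determinant computation.

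First, I would extend the identities \eqref{E:EK2} and \eqref{E:EK3} from a single point to a divisor of degree zero. By linearity, for any divisor $\xi = \sum_P n_P (P)$ with $\sum_P n_P = 0$, the constant term $\log^3|q|/120$ appearing in \eqref{E:EK3} contributes $\left(\sum_P n_P\right)\tfrac{\log^3|q|}{120} = 0$. Hence, writing $A_j := \Re(D_{1,3}(q;\xi_j))$ and $B_j := D_{2,2}(q;\xi_j)$, I would obtain the clean formulas
\begin{align*}
\mathcal{L}^E_{3,1}(\xi_j) &= \tfrac{1}{6}\bigl(A_j - B_j\bigr), \\
\mathcal{L}^E_{3,2}(\xi_j) &= -\tfrac{1}{4}\bigl(2A_j + B_j\bigr),
\end{align*}
for $j = 1,2$.

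Next, I would plug these into the determinant on the left-hand side and expand. The computation is linear algebra: the row operations change the $2\times 2$ matrix $\bigl[\mathcal{L}^E_{3,1}(\xi_j)\ \ \mathcal{L}^E_{3,2}(\xi_j)\bigr]$ into a scalar multiple of $\bigl[A_j\ \ B_j\bigr]$, since the change-of-basis matrix from $(A_j, B_j)$ to $(\mathcal{L}^E_{3,1}(\xi_j), \mathcal{L}^E_{3,2}(\xi_j))$ is the constant $2\times 2$ matrix
\[
M = \begin{pmatrix} 1/6 & -1/2 \\ -1/6 & -1/4 \end{pmatrix}, \qquad \det M = -\tfrac{1}{24} - \tfrac{1}{12} = -\tfrac{1}{8}.
\]
Thus
\[
\begin{vmatrix} \mathcal{L}^E_{3,1}(\xi_1) & \mathcal{L}^E_{3,2}(\xi_1) \\ \mathcal{L}^E_{3,1}(\xi_2) & \mathcal{L}^E_{3,2}(\xi_2) \end{vmatrix} = -\tfrac{1}{8}\begin{vmatrix} A_1 & B_1 \\ A_2 & B_2 \end{vmatrix}.
\]

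Finally, I would use Proposition~\ref{P:Zagier}(iii) with $r = a+b-1 = 3$. Since $(\tau-\bar\tau)^3 = -8i\,\Im(\tau)^3$, it gives
\[
D_{a,b}(q;\xi_j) = \frac{-8i\,\Im(\tau)^3}{2\pi i}\,K_{a,b}(\tau;\xi_j) = -\frac{4\Im(\tau)^3}{\pi}\,K_{a,b}(\tau;\xi_j),
\]
and since $K_{2,2}(\tau;\xi_j)$ is real (pair $(m,n) \leftrightarrow (-m,-n)$ in its defining sum), both $A_j$ and $B_j$ carry the same real factor $-4\Im(\tau)^3/\pi$. This factor pulls out squared, so
\[
\begin{vmatrix} A_1 & B_1 \\ A_2 & B_2 \end{vmatrix} = \frac{16\,\Im(\tau)^6}{\pi^2}\begin{vmatrix} \Re(K_{1,3}(\tau;\xi_1)) & K_{2,2}(\tau;\xi_1) \\ \Re(K_{1,3}(\tau;\xi_2)) & K_{2,2}(\tau;\xi_2) \end{vmatrix},
\]
which combined with the $-\tfrac{1}{8}$ factor gives exactly the claimed identity. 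There is no genuine obstacle here; the only subtlety to watch is ensuring that the constant term in \eqref{E:EK3} is killed by the degree-zero hypothesis, which is precisely why the statement is restricted to divisors of degree zero.
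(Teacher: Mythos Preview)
Your proof is correct and follows essentially the same approach as the paper: both use the linear relations \eqref{E:EK2}, \eqref{E:EK3} (with the constant killed by the degree-zero hypothesis) together with Proposition~\ref{P:Zagier}(iii) and a $2\times 2$ change-of-basis determinant. The only cosmetic difference is that the paper first inverts to express $\Re(D_{1,3})$ and $D_{2,2}$ in terms of $\mathcal{L}^E_{3,1}$, $\mathcal{L}^E_{3,2}$ and computes that determinant as $-8$, whereas you work directly with the forward matrix and obtain $-\tfrac{1}{8}$; the outcome is identical.
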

\begin{proof}
For any degree zero divisor $\xi=\sum_{P\in E}n_P (P)$, it follows directly from \eqref{E:EK2} and \eqref{E:EK3} that
\begin{align*}
\Re\left(D_{1,3}(q;\xi)\right)&=2\mathcal{L}^E_{3,1}(\xi)-\frac{4}{3}\mathcal{L}^E_{3,2}(\xi),\\
D_{2,2}(q;\xi)&=-4\mathcal{L}^E_{3,1}(\xi)-\frac{4}{3}\mathcal{L}^E_{3,2}(\xi),
\end{align*}
where $q=e^{2\pi i\tau}$ and $D_{a,b}(q;\xi)=\sum_{P\in E}n_P D_{a,b}(q;e^{2\pi i u_P}).$ Then by Proposition~\ref{P:Zagier}(iii) and the two equations above one has that
\begin{align*}
\left(\frac{4\Im(\tau)^3}{\pi}\right)^2
\begin{vmatrix}
\Re\left(K_{1,3}(\tau;\xi_1)\right) & K_{2,2}(\tau;\xi_1) \\ 
\Re\left(K_{1,3}(\tau;\xi_2)\right) & K_{2,2}(\tau;\xi_2)
\end{vmatrix}&= 
\begin{vmatrix}
\Re\left(D_{1,3}(q;\xi_1)\right) & D_{2,2}(q;\xi_1) \\ 
\Re\left(D_{1,3}(q;\xi_2)\right) & D_{2,2}(q;\xi_2)
\end{vmatrix}\\
&=-8\begin{vmatrix}
\mathcal{L}^E_{3,1}(\xi_1) & \mathcal{L}^E_{3,2}(\xi_1) \\ 
\mathcal{L}^E_{3,1}(\xi_2) & \mathcal{L}^E_{3,2}(\xi_2)
\end{vmatrix}.
\end{align*}
\end{proof}
More generally, a conjecture relating $L(\Sym^n E,n+1)$ to determinants of Eisenstein-Kronecker series was formulated by Goncharov in \cite[\S 6]{Goncharov}. By the functional equation for $L(\Sym^2 E,s)$, the relation \eqref{E:MS} can be seen as a special case of this conjecture when $n=2$.

On the other hand, we observed from our computational experiments that if $E_s$ is CM, the functions $\mathcal{L}^{E_s}_{3,1}$ and $\mathcal{L}^{E_s}_{3,2}$ evaluated at some torsion divisors are individually related to lower degree $L$-values; i.e., the ones associated to Dirichlet characters and elliptic modular forms. These results are listed at the end of this section. In particular, some weaker results below are immediate consequences of Theorem~\ref{T:Main} and \cite[Thm.1.4]{SamartII}. Recall that $M_N$ and $d_k$ are as defined in Section~\ref{Sec:MM}, and superscripts will be attached to $M_N$ if there are more than one normalized newform of a given level $N$ with rational Fourier coefficients. 
\begin{proposition}
Let $E_s,F_s,$ and $G_s$ be the families of elliptic curves as defined in Section~\ref{Sec:MM}. 
\begin{enumerate}
\item[(i)]
Let $E$ denote $E_{256} : y^2=(x-1)\left(x^2-\frac{4}{3}\right)$, $P=\left(-\frac{2}{\sqrt{3}},0\right)$, and $Q=\left(\frac{2}{\sqrt{3}},0\right)$. Then we have
\begin{equation*}
\left(6\mathcal{L}_{3,1}^E-\mathcal{L}_{3,2}^E\right)((P)-(Q))=\frac{\pi^2}{2}(M_{48}+2d_4).
\end{equation*}
\item[(ii)]
Let $O,P$ and $Q$ are the points on $F_s$ corresponding to $1,1/3$ and $\tau/3$, respectively, via the isomorphism $F_s\cong \mathbb{C}/(\mathbb{Z}+\mathbb{Z}\tau), \tau\in\mathcal{H}$. \\
If $\mathcal{T}(s):=15\mathcal{L}^{F_s}_{3,1}((Q)-3(P)-6(P+Q))+\mathcal{L}^{F_s}_{3,2}(3(P)+6(P+Q)-7(Q)-2(O))$, then the following formulas are true:
\begin{align*}
\mathcal{T}(108)&=20\pi^2M_{12}, \qquad\qquad \mathcal{T}(216)=5\pi^2\left(M_{24}^{(2)}+d_3\right),\\
\mathcal{T}(1458)&=\frac{3\pi^2}{2}\left(9M_{12}+2d_4\right).
\end{align*}
\end{enumerate}
\item[(iii)] 
Let $P$ and $Q$ denote the points on $G_s$ corresponding to $\tau/2$ and $3/4$, respectively, via the isomorphism $G_s\cong \mathbb{C}/(\mathbb{Z}+\mathbb{Z}\tau)$. If we set
\begin{align*}
\mathcal{U}(s)&:=15\mathcal{L}^{G_s}_{3,1}(2(P)-(2Q)+2(P+2Q))\\ &\qquad+\mathcal{L}^{G_s}_{3,2}(4(Q)-5(P)+2(2Q)+4(P+Q)-5(P+2Q)),\\
\mathcal{V}(s)&:=30\mathcal{L}^{G_s}_{3,1}(2(2Q)-(P+2Q)+2(P))\\
&\qquad +\mathcal{L}^{G_s}_{3,2}(5(P+2Q)+8(Q)+8(P+Q)-11(2Q)-10(P)),
\end{align*}
then the following formulas are true:
\begin{align*}
\mathcal{U}(256)&=\frac{45\pi^2}{2}M_8, &\mathcal{U}(648)&=\frac{45\pi^2}{32}\left(4M_{16}+d_4\right),\\
\mathcal{U}(2304)&=\frac{15\pi^2}{4}\left(M_{24}^{(1)}+d_3\right), &\mathcal{U}(20736)&=\frac{9\pi^2}{20}\left(5M_{40}^{(1)}+2d_3\right),\\
\mathcal{U}(614656)&=\frac{15\pi^2}{2}\left(5M_8+d_3\right),& &\\
\mathcal{U}(3656+2600\sqrt{2})&=\frac{45\pi^2}{128}\left(4M_{32}+28M_8+4d_4+d_8\right),& &\\
\mathcal{V}(3656-2600\sqrt{2})&=\frac{45\pi^2}{64}\left(44M_{32}-28M_8+4d_4-d_8\right).& & 
\end{align*}
\end{proposition}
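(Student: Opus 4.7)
The plan is to combine Theorem~\ref{T:Main} directly with the explicit Mahler measure evaluations of \cite[Thm.~1.4]{SamartII}, which at each value of $s$ appearing in the proposition expresses $n_j(s)$ as an explicit $\mathbb{Q}$-linear combination of $L$-values of types $M_N$ and $d_k$. Since the right-hand sides of \eqref{E:Main}, \eqref{E:Main2}, \eqref{E:Main31}, and \eqref{E:Main32} are rational multiples (with $\pi^{-2}$ factors) of the combinations $(6\mathcal{L}^{E_s}_{3,1}-\mathcal{L}^{E_s}_{3,2})((P)-(Q))$, $\mathcal{T}(s)$, $\mathcal{U}(s)$, and $\mathcal{V}(s)$ respectively, each formula in the proposition will drop out simply by solving $n_j(s)=c_j\cdot(\text{trilogarithm combination})$ for that combination.

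Concretely, for (i) I would first note that $s=256>64$, so we are in Case~\ref{C:1} of Theorem~\ref{T:Main}(i); substituting $s=256$ into $r=\sqrt{s/(s-64)}$ gives $r=2/\sqrt{3}$, confirming that $P$ and $Q$ match those in the statement. Combining \eqref{E:Main} with the value of $n_2(256)$ supplied by \cite[Thm.~1.4]{SamartII} and multiplying through by $3\pi^2/8$ isolates the desired identity. For (ii) I would apply the same procedure at $s=108,216,1458$: each satisfies $s\geq 108$, so \eqref{E:Main2} gives $n_3(s)=\tfrac{3}{4\pi^2}\mathcal{T}(s)$, and inserting the known values of $n_3(s)$ from \cite[Thm.~1.4]{SamartII} yields the claimed formulas for $\mathcal{T}(s)$. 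For (iii) I would split on the sign of $s$: all listed values except $s=3656-2600\sqrt{2}\approx -21$ satisfy $s\geq 256$, so \eqref{E:Main31} handles them and produces $\mathcal{U}(s)$ formulas, while the single negative value falls under \eqref{E:Main32} and contributes the one $\mathcal{V}(s)$ formula.

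No serious obstacle is anticipated; the argument is formal arithmetic once the two input theorems are in hand. The only point worth checking carefully is that each $s$ in the list corresponds to a CM member of the relevant family (so that \cite[Thm.~1.4]{SamartII} gives a \emph{provable} rather than conjectural $L$-value evaluation) and that the numerical factor $c_j$ produced by Theorem~\ref{T:Main} matches the coefficient stated in the proposition after clearing denominators. Both checks are routine, so the proposition should follow directly from the combination above.
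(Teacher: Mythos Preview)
Your approach is exactly what the paper does: it states explicitly that the proposition is an ``immediate consequence of Theorem~\ref{T:Main} and \cite[Thm.~1.4]{SamartII},'' and your proposal simply unpacks that sentence case by case. The sign check on $3656-2600\sqrt{2}<0$ and the verification that the remaining $s$-values fall in the stated ranges are the only things to confirm, and you have handled them correctly.
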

We will conclude this paper by listing some conjectural formulas for $\mathcal{L}_{3,j}^{E_s}$ evaluations at torsion points when $E_s$ is a CM elliptic curve over $\mathbb{Q}$. For each fixed $s$, we let $L_j=\frac{1}{\pi^2}\mathcal{L}_{3,j}^{E_s}$, and $O,P,$ and $Q$ denote the points on $E_s$ corresponding to $1,\tau/2,$ and $3/4,$ respectively, via the isomorphism $E_s\cong \mathbb{C}/(\mathbb{Z}+\mathbb{Z}\tau)$.  We firmly believe that some of these formulas could be verified rigorously using Corollary~\ref{Co:2} and double series expressions of $L$-series established in \cite{SamartII}.

\begin{table}[h]
\centering
    \begin{tabular}{ | c | c | c | c |}
    \hline
     & \tiny{$s=-512$} & \tiny{$s=-64$} & \tiny{$s=-8$} \\ \hline
    \tiny{$L_1(2Q)$} & $-\frac{(6M_{16}+5d_4)}{36}$ & $-\frac{(18M_{8}+d_8)}{36}$ & $-\frac{(6M_{16}+d_4)}{36}$ \\ \hline
    \tiny{$L_1(P)$} & $\frac{6M_{16}+6M_{64}-d_4+12d_8}{288}$ & $\frac{18M_{8}+6M_{32}+12d_4-d_8}{144}$ & $\frac{6M_{16}-d_4}{36}$ \\ \hline
    \tiny{$L_1(P+2Q)$} & $\frac{6M_{16}-6M_{64}-d_4-12d_8}{288}$ & $\frac{18M_{8}-6M_{32}-12d_4-d_8}{144}$ & $-\frac{d_4}{9}$ \\ \hline
    \tiny{$L_1(Q)$} & $-\frac{(6M_{16}+d_4)}{288}$ & $-\frac{(18M_{8}+6M_{32}-12d_4+d_8)}{576}$ & $-\frac{(6M_{16}+6M_{64}+d_4-12d_8)}{576}$ \\ \hline
    \tiny{$L_1(P+Q)$} & $-\frac{d_4}{72}$ & $-\frac{(18M_{8}-6M_{32}+12d_4+d_8)}{576}$ & $-\frac{(6M_{16}-6M_{64}+d_4+12d_8)}{576}$ \\ \hline
    \tiny{$L_1(O)$} & $\frac{6M_{16}+7d_4}{36}$ & $\frac{(6M_{8}+d_8)}{18}$ & $\frac{2d_4}{9}$ \\ \hline
    \end{tabular}
    \caption{Conjectured formulas of $L_1$}
\label{Tb:1}
\end{table}

\begin{table}[h]
\centering
    \begin{tabular}{ | c | c | c | c | c |}
    \hline
     & \tiny{$s=1$} & \tiny{$s=16$} & \tiny{$s=256$} &  \tiny{$s=4096$}\\ \hline
    \tiny{$L_1(2Q)$} & $-\frac{(48M_{7}-d_7)}{36}$ & $-\frac{(2M_{12}+d_3)}{12}$ & $\frac{2M_{12}-2M_{48}-d_3-8d_4}{72}$ & $\frac{48M_{7}-6M_{112}-96d_4-d_7}{504}$\\ \hline
    \tiny{$L_1(P)$} & $\frac{33M_{7}-2d_7}{36}$ & $\frac{M_{12}-d_3}{12}$ & $\frac{2M_{12}+2M_{48}-d_3+8d_4}{72}$ & $\frac{48M_{7}+6M_{112}+96d_4+d_7}{504}$\\ \hline
    \tiny{$L_1(P+2Q)$} & $\frac{33M_{7}-2d_7}{36}$ & $\frac{M_{12}-d_3}{12}$ & $-\frac{(2M_{12}+2d_3)}{9}$ & $-\frac{66M_{7}+4d_7}{63}$\\ \hline
    \tiny{$L_1(Q)$} & $-\frac{(48M_{7}+6M_{112}-96d_4-d_7)}{576}$ & $-\frac{(2M_{12}+2M_{48}+d_3-8d_4)}{192}$ & $(*)$ & $(*)$\\ \hline
    \tiny{$L_1(P+Q)$} & $-\frac{(48M_{7}-6M_{112}+96d_4-d_7)}{576}$ & $-\frac{(2M_{12}-2M_{48}+d_3+8d_4)}{192}$ & $(*)$ & $(*)$\\ \hline
    \tiny{$L_1(O)$} & $-\frac{(6M_{7}-d_7)}{9}$ & $\frac{d_3}{3}$ & $\frac{2M_{12}+3d_3}{9}$ & $\frac{72M_{7}+5d_7}{63}$\\ \hline
    \end{tabular}
    \caption{Conjectured formulas of $L_1$ (continued)}
\label{Tb:2}
\end{table}

$(*)$ When $s=256$ and $s=4096$, no individual conjectural formulas of $L_1(Q)$ and $L_1(P+Q)$ were detected in our numerical computations. However, we found that 
\begin{align*}
s&=256: &L_1((Q)+(P+Q))&\stackrel{?}=\frac{2M_{12}-2M_{48}-d_3-8d_4}{576},\\
s&=4096: &L_1((Q)+(P+Q))&\stackrel{?}=\frac{48M_{7}-6M_{112}-96d_4+d_7}{4032}.
\end{align*}

\begin{table}[h]
\centering
    \begin{tabular}{ | c | c |}
    \hline
    $s=-512$ & \tiny{$L_2((P)-(P+2Q))\stackrel{?}=\frac{M_{64}-d_8}{8},\qquad L_2(3(P+Q)-4(Q)+(O))\stackrel{?}=\frac{3M_{16}-d_4}{4}$}\\ \hline
    $s=-64$ & \tiny{$L_2((P)-(P+2Q))\stackrel{?}=\frac{M_{32}-d_4}{4},\qquad L_2((Q)-(P+Q))\stackrel{?}=\frac{M_{32}+d_4}{16}$}\\ \hline
    $s=-8$ & \tiny{$L_2((2Q)-(P))\stackrel{?}=M_{16},\qquad L_2((Q)+(P+Q)-2(O))\stackrel{?}=\frac{4M_{16}-43d_4}{64}$}\\ \hline
    $s=1$ & \tiny{$L_2((2Q)-(P))\stackrel{?}=\frac{54M_{7}+d_7}{8},\qquad L_2((Q)-(P+Q))\stackrel{?}=\frac{M_{112}+8d_4}{16}$}\\ \hline
    $s=16$ & \tiny{$L_2((2Q)-(P))\stackrel{?}=\frac{3M_{12}}{4},\qquad L_2((Q)-(P+Q))\stackrel{?}=\frac{M_{48}+2d_4}{16}$}\\ \hline
    $s=256$ & \tiny{$L_2((2Q)-(P))\stackrel{?}=\frac{M_{48}-2d_4}{6},\qquad L_2((Q)+(P+Q)-2(O))\stackrel{?}=\frac{508M_{12}+4M_{48}-385d_3-8d_4}{384}$}\\ \hline
    $s=4096$ & \tiny{$L_2((2Q)-(P))\stackrel{?}=\frac{M_{112}-8d_4}{14},\qquad L_2((Q)+(P+Q)-2(O))\stackrel{?}=\frac{6112M_{7}+4M_{112}-32d_4-213d_7}{896}$}\\ \hline
    \end{tabular}
    \caption{Conjectured formulas of $L_2$}
\label{Tb:3}
\end{table}

One of the remarkable features of these formulas is that they appear to support the conjectural relation \eqref{E:MS}. For instance, consider the CM elliptic curve $E:=E_{-8}$ of conductor $576$ with the corresponding $\tau=\sqrt{-1}.$ One can verify, at least numerically, that 
\begin{equation*}
L''(\Sym^2 E,0)\stackrel{?}=2d_{4} M_{16}.
\end{equation*}
Then, choosing $\xi_1=(Q)+(P+Q)-2(O)$ and $\xi_2=(2Q)-(P)$, we obtain the following identity directly from our formulas in Table~\ref{Tb:1} and Table~\ref{Tb:3}:
\begin{equation*}
\begin{vmatrix}
\mathcal{L}^E_{3,1}(\xi_1) & \mathcal{L}^E_{3,2}(\xi_1) \\ 
\mathcal{L}^E_{3,1}(\xi_2) & \mathcal{L}^E_{3,2}(\xi_2)
\end{vmatrix}\stackrel{?}=-\frac{43\pi^4}{64}d_{4} M_{16}\stackrel{?}=\frac{43\pi^4}{128}\Im(\tau)^2L''(\Sym^2 E,0).
\end{equation*}
To our knowledge, this particular example does not seem to appear in the literature, though it is exactly analogous to the numerical result due to Zagier which involves a non-CM elliptic curve of conductor $37$ \cite[\S 10]{ZG}. As a possible continuing research project, it would be interesting to find and prove this type of relations for some CM elliptic curves, which we will not pursue in this paper.\\

\textbf{Acknowledgements}
The author would like to thank Ken Ono for his encouragement during the author's visit at Emory University in the spring semester 2013. His comments and suggestions significantly lead to a motivation to continue this research project. The author also thanks Matthew Papanikolas for several helpful discussions and he is grateful to the anonymous referee for useful comments and advice which help improve the exposition of this paper.

\bibliographystyle{amsplain}

\end{document}